\numberwithin{equation}{section}
\newtheorem{thm}{Theorem}[section]
\newtheorem{cor}[thm]{Corollary}
\newtheorem{prop}[thm]{Proposition}
\newtheorem{lemma}[thm]{Lemma}
\newcommand{\del}{\backslash}
\newcommand{\cl}{\hbox{\rm cl}}
\newcommand{\mcZ}{\mathcal{Z}}
\newcommand{\mcA}{\mathcal{A}}
\newcommand{\mcF}{\mathcal{F}}
\newcommand{\mcC}{\mathcal{C}}
\newcommand{\mcE}{\mathcal{E}}
\newcommand{\mcH}{\mathcal{H}}
\newcommand{\mbH}{\mathscr{H}}
\newcommand{\join}{\lor}
\newcommand{\meet}{\land}
\title[A Construction of Intertwines]{A Construction of Infinite Sets
  of Intertwines \\ for Pairs of Matroids}
\date{\today} 
\author[J.~Bonin]{Joseph E.~Bonin} 
\address{Department of Mathematics\\ The George Washington
  University\\ Washington, D.C. 20052}
\email{jbonin@gwu.edu} 
\subjclass{Primary: 05B35}
\keywords{Matroid, intertwine, cyclic flat, free extension, free
  coextension.}
\begin{document}

\begin{abstract}
  An intertwine of a pair of matroids is a matroid such that it, but
  none of its proper minors, has minors that are isomorphic to each
  matroid in the pair.  For pairs for which neither matroid can be
  obtained, up to isomorphism, from the other by taking free
  extensions, free coextensions, and minors, we construct a family of
  rank-$k$ intertwines for each sufficiently large integer $k$.  We
  also treat some properties of these intertwines.
\end{abstract}

\maketitle

\section{Introduction}

If the classes $\mcC_1$ and $\mcC_2$ of matroids are minor-closed,
then so is $\mcC_1\cup\mcC_2$.  If $M$ is an excluded minor for
$\mcC_1\cup\mcC_2$, then some minor of $M$ is an excluded minor for
$\mcC_1$ and another is an excluded minor for $\mcC_2$; furthermore,
no proper minor of $M$ has this property.  These remarks motivate the
following definition.  A matroid $M$ is an \emph{intertwine} of
matroids $M_1$ and $M_2$ if $M$ but none of its proper minors has both
an $M_1$-minor (i.e., a minor isomorphic to $M_1$) and an $M_2$-minor.
Thus, each excluded minor for $\mcC_1\cup\mcC_2$ is an intertwine of
some excluded minor for $\mcC_1$ and some excluded minor for $\mcC_2$.

Many important results and problems in matroid theory involve the
question of whether the set of excluded minors for a given
minor-closed class of matroids is finite; this leads to the question
of whether some pairs of matroids have infinitely many intertwines.
This question was raised by Tom Brylawski~\cite{const}; see
also~\cite[Problem 14.4.6]{ox}, where it is also attributed to Neil
Robertson and, in a different form, to Dominic Welsh.  The question
was settled affirmatively by Dirk Vertigan in the mid 1990's in
unpublished work; we sketch his construction in
Section~\ref{sec:dirk}.  Jim Geelen gave another
construction~\cite[Section 5]{jim}: for each pair of spikes, neither
being a minor of the other and all elements of which are in dependent
transversals, he constructed infinitely many intertwines that are also
spikes.  (That the class of spikes contains such infinite sets of
intertwines follows from Vertigan's construction along with his
embedding of the minor ordering on all matroids into that on the class
of spikes (for this intriguing embedding, see \cite[Section 3]{jim});
Geelen's construction is an attractive realization of this
phenomenon.)  In this paper, we take weaker hypotheses than the
earlier constructions used; we assume only that neither $M_1$ nor
$M_2$ can be obtained, up to isomorphism, from the other via free
extensions, free coextensions, and minors; for such a pair $(M_1,
M_2)$, we show that particular amalgams of certain free coextensions
of $M_1$ and $M_2$ are intertwines.  This yields many intertwines of
each sufficiently large rank; indeed, for some pairs, a variation on
our basic construction produces intertwines whose number grows at
least exponentially as a function of the rank.

We assume readers know basic matroid theory, an excellent account of
which is in~\cite{ox}.  Key background topics are collected in
Section~\ref{sec:pre} and the construction and a variation are given
in Section~\ref{sec:inter}.  In Section~\ref{sec:props}, we treat
properties of these intertwines; for instance, we show that for large
ranks, the intertwines we construct have large connectivity and
uniform minors of large rank and corank; we show that if both matroids
have no free elements, no cofree elements, no isthmuses, and no loops,
then, for a fixed integer $k$, the intertwines we construct cover the
full range of possible sizes for the ground sets of rank-$k$
intertwines of the pair; we also show that the construction preserves
certain properties, such as being transversal and being a gammoid.  In
Section~\ref{sec:dirk}, we explain the relation between our
construction and Dirk Vertigan's.

\section{Background}\label{sec:pre}

The intertwines we construct are defined via cyclic flats and their
ranks.  A \emph{cyclic set} in a matroid $M$ is a (possibly empty)
union of circuits.  The cyclic flats of $M$, ordered by inclusion,
form a lattice; indeed, $F\join G = \cl_M(F\cup G)$ and $F\meet G$ is
the union of the circuits in $F\cap G$.  We let $\mcZ(M)$ denote both
the set and the lattice of cyclic flats of $M$.  The following
well-known results are easy to prove~\cite[Problem 2.1.13]{ox}:
\begin{enumerate}
\item $\mcZ(M^*) = \{S-F\,:\,F\in \mcZ(M)\}$, where $S$ is the ground
  set of $M$,
\item $\mcZ(M_1\oplus M_2) = \{F_1\cup F_2\,:\, F_1\in \mcZ(M_1)
  \text{ and } F_2\in \mcZ(M_2)\}$, and
\item a matroid is determined by its cyclic flats and their ranks.
\end{enumerate}
There are many ways to prove property (3); for instance, one can show
how to get the circuits or the independent sets, or show that the rank
of an arbitrary set $Y$ in $M$ is given by the formula
\begin{equation}\label{rankbycyclicflats}
  r(Y)=\min\{r(F)+|Y-F|\,:\,F\in\mcZ(M)\}.
\end{equation}
The following result from~\cite{juliethesis,cyclic}
carries property (3) further.

\begin{prop}\label{prop:axioms}
  Let $\mcZ$ be a collection of subsets of a set $S$ and let $r$ be an
  integer-valued function on $\mcZ$.  There is a matroid for which
  $\mcZ$ is the collection of cyclic flats and $r$ is the rank
  function restricted to the sets in $\mcZ$ if and only if
  \begin{itemize}
  \item[(Z0)] $\mcZ$ is a lattice under inclusion,
  \item[(Z1)] $r(0_{\mcZ})=0$, where $0_{\mcZ}$ is the least element
    of $\mcZ$,
  \item[(Z2)] $0<r(Y)-r(X)<|Y-X|$ for all sets $X,Y$ in $\mcZ$ with
    $X\subset Y$, and
  \item[(Z3)] for all pairs of incomparable sets $X,Y$ in $\mcZ$,
    $$   r(X)+r(Y)\geq r(X\join Y) + r(X\meet Y) + |(X\cap Y) - (X\meet
    Y)|.$$
  \end{itemize}
\end{prop}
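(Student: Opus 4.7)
The ``only if'' direction is routine. If $\mcZ=\mcZ(M)$ and $r$ is the restriction of the rank function of $M$, then (Z0) restates that cyclic flats form a lattice, and (Z1) holds because the minimum cyclic flat is the set of loops of $M$. For (Z2) with $X\subsetneq Y$ in $\mcZ$, flatness of $Y$ gives $r(X)<r(Y)$; and if $r(Y)-r(X)=|Y-X|$, then for any $y\in Y-X$ the bound $r(Y-\{y\})\leq r(X)+|(Y-X)-\{y\}|=r(Y)-1$ makes $y$ a coloop of $M|Y$, contradicting that $Y$ is cyclic. For (Z3), combine standard submodularity $r(X)+r(Y)\geq r(X\cup Y)+r(X\cap Y)$ with $r(X\cup Y)=r(X\join Y)$ and $r(X\cap Y)=r(X\meet Y)+|(X\cap Y)-(X\meet Y)|$; the latter holds because the elements of $(X\cap Y)-(X\meet Y)$ are coloops of $M|(X\cap Y)$.

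For the ``if'' direction, define an integer-valued function $\widetilde r$ on $2^S$ by
$$\widetilde r(Y)=\min\{r(F)+|Y-F|\,:\,F\in\mcZ\}.$$
I would verify (a)~$\widetilde r$ is a matroid rank function, (b)~$\widetilde r|_{\mcZ}=r$, and (c)~the resulting matroid $M$ has $\mcZ(M)=\mcZ$. For (a), $\widetilde r(\emptyset)=r(0_{\mcZ})=0$ by (Z1), and monotone unit increments are immediate from the formula by plugging the minimizer for $\widetilde r(Y)$ into the expression for $\widetilde r(Y\cup\{x\})$. Submodularity is the crux: given $Y,Z\subseteq S$, let $F,G\in\mcZ$ minimize the formulas for $\widetilde r(Y)$ and $\widetilde r(Z)$, and bound $\widetilde r(Y\cup Z)$ at $F\join G$ and $\widetilde r(Y\cap Z)$ at $F\meet G$. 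Using (Z3) to bound $r(F\join G)+r(F\meet G)$, submodularity reduces to the set inequality
$$|(Y\cup Z)-(F\join G)|+|(Y\cap Z)-(F\meet G)|\leq |Y-F|+|Z-G|+|(F\cap G)-(F\meet G)|,$$
which one verifies by a case check on the indicator bits for membership of each $x\in S$ in $Y,Z,F,G$, using $F\join G\supseteq F\cup G$ and $F\meet G\subseteq F\cap G$.

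For (b), I would show $r(F)\leq r(G)+|F-G|$ for all $F,G\in\mcZ$, with strict inequality when $G\neq F$. The comparable cases follow from (Z2). When $F,G$ are incomparable, (Z2) applied to $F\meet G\subsetneq F$, combined with (Z3) and the identity $|F-(F\meet G)|=|F-G|+|(F\cap G)-(F\meet G)|$, yields $r(F)\leq r(F\join G)\leq r(G)+|F-G|-1$. For (c), the strict inequality for $G\neq F$ implies each $F\in\mcZ$ is closed (for $x\notin F$, no $G$ at $F\cup\{x\}$ matches the value $\widetilde r(F)$) and has no coloops in $M|F$ (for $x\in F$, any $G\neq F$ with $x\notin G$ still gives $\widetilde r(F-\{x\})\geq\widetilde r(F)$). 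Conversely, if $H$ is a cyclic flat of $M$ and $F\in\mcZ$ achieves the minimum at $H$, then flatness of $H$ forces $F\subseteq H$ (any $x\in F-H$ would give $\widetilde r(H\cup\{x\})\leq r(F)+|H-F|=\widetilde r(H)$, violating flatness), and cyclicity forces $H\subseteq F$ (any $x\in H-F$ would give $\widetilde r(H-\{x\})\leq\widetilde r(H)-1$, violating cyclicity).

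The main obstacle is the submodularity argument: one must identify $(F\join G, F\meet G)$ as the witnessing pair so that the extra term $|(F\cap G)-(F\meet G)|$ in (Z3) exactly cancels the set-theoretic overcount, and then verify the element-by-element counting inequality without error. Once this step is in hand, the identifications in (b) and (c) follow by relatively direct manipulations of the formula defining $\widetilde r$.
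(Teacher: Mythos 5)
The paper does not prove this proposition; it is quoted as a known result from Sims's thesis and from Bonin--de~Mier \cite{juliethesis,cyclic}, so there is no in-paper argument to compare against. Your proof is correct and is essentially the standard one from \cite{cyclic}: it is built on the rank formula that the paper records as equation~(\ref{rankbycyclicflats}), with submodularity witnessed by the pair $(F\join G,F\meet G)$ so that the surplus term $|(F\cap G)-(F\meet G)|$ in (Z3) absorbs the set-theoretic overcount. I checked the two delicate points --- the pointwise counting inequality in the submodularity step (including the case of comparable $F,G$, where (Z3) degenerates to a triviality) and the strict inequality $r(F)<r(G)+|F-G|$ for incomparable $F,G$ via (Z2) applied to $F\meet G\subsetneq F$ --- and both go through, so the identifications in (b) and (c) follow as you describe.
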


Recall that the free extension $M+x$ of the matroid $M$ on $S$ by the
element $x\not\in S$ is the matroid on $S\cup x$ whose circuits are
those of $M$ along with the sets $B\cup x$ as $B$ runs over the bases
of $M$.  We extend this notation to sets: $M+X$ is the result of
applying free extension iteratively to add all elements of $X$ to $M$.
From the perspective of Proposition~\ref{prop:axioms}, $M+X$, for
$X\ne \emptyset$, is the matroid on $S\cup X$ whose cyclic flats and
ranks are (i) the proper cyclic flats $F$ of $M$, with rank $r_M(F)$,
and (ii) $S\cup X$, of rank $r(M)$.  Dually, the cyclic flats and
ranks of the free coextension $M\times X = (M^*+X)^*$ are (i) the sets
$F\cup X$, of rank $r_M(F)+|X|$, for $F\in \mcZ(M)$ with $F\ne
\emptyset$, and (ii) the empty set, of rank $0$.

We use only the simplest type of lift and truncation.  The
\emph{$i$-fold lift} $L^i(M)$ of $M$ is $(M\times X)\del X$ where
$|X|=i$; dually, $(M+X)/X$ is the \emph{$i$-fold truncation,}
$T^i(M)$. 

The \emph{nullity} of a set $Y$ is $\eta(Y)=|Y|-r(Y)$.  Let $\mcZ'(M)$
be the set of nonempty proper cyclic flats of $M$ and let
$\eta(\mcZ'(M))$ be the sum of the nullities of these flats.  The
following lemma is easy to prove.

\begin{lemma}\label{lem:cy}
  If $F\in \mcZ(M\del x)$, then $\cl_M(F)\in \mcZ(M)$.
  If $\cl_M(F) = F$, then $\eta_{M\del x}(F)$ is $\eta_{M}(F)$; if
  $\cl_M(F) = F\cup x$, then $\eta_{M\del x}(F) = \eta_{M}(F\cup
  x)-1$.

  Dually, if $F\in \mcZ(M/y)$, then exactly one of $F$ and
  $F\cup y$ is in $\mcZ(M)$.  The nullities of $F$ in $M/y$ and
  the corresponding cyclic flat of $M$ agree unless $y$ is a loop of
  $M$, in which case $\eta_{M/y}(F) = \eta_{M}(F\cup y)-1$.

  Thus, if $N$ is a minor of $M$, then $\eta(\mcZ'(N)) \leq
  \eta(\mcZ'(M))$.
\end{lemma}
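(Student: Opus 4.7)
The plan is to handle the three assertions in sequence: the deletion part directly, the contraction part by duality, and the minor inequality by iterating.

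For the deletion part, the key observation is that the circuits of $M\del x$ are exactly the circuits of $M$ that avoid $x$, so any union of circuits of $M\del x$ is also a union of circuits of $M$. Combining the hypothesis that $F$ is a flat of $M\del x$ with $\cl_{M\del x}(F)=\cl_M(F)\setminus x$ forces $\cl_M(F)\subseteq F\cup x$, so $\cl_M(F)$ is either $F$ or $F\cup x$. In the latter case, $x$ lies in some circuit of $M$ contained in $F\cup x$, so $\cl_M(F)$ remains a union of circuits; hence $\cl_M(F)\in\mcZ(M)$. The nullity identities then follow from $r_{M\del x}(F)=r_M(F)$ (since $F\subseteq S\setminus x$), together with $r_M(F\cup x)=r_M(F)$ in the case $\cl_M(F)=F\cup x$.

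For the contraction part, I would apply the deletion result to $M^*$ with the element $y$, using $(M/y)^*=M^*\del y$ and $\mcZ(M^*)=\{S-F:F\in\mcZ(M)\}$. A cyclic flat $F$ of $M/y$ corresponds to $G=S-(F\cup y)\in\mcZ(M^*\del y)$, and the two cases $\cl_{M^*}(G)=G$ versus $\cl_{M^*}(G)=G\cup y$ translate, via complementation, into exactly one of $F\cup y$ or $F$ being in $\mcZ(M)$. For the nullity identities, I would combine $r_{M/y}(F)=r_M(F\cup y)-r_M(\{y\})$ with a case split on whether $y$ is a loop; when $y$ is a loop it must lie in every flat of $M$, so $F\cup y$ must be the cyclic flat, and the loop contributes the $-1$ discrepancy in $\eta_{M/y}(F)=\eta_M(F\cup y)-1$; in every other subcase the nullities of $F$ and its corresponding cyclic flat agree.

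For the final inequality, I would argue one deletion or contraction step at a time and then iterate. The map $F\mapsto\cl_M(F)$ from $\mcZ(M\del x)$ to $\mcZ(M)$ is injective, since $\cl_M(F_1)=\cl_M(F_2)$ forces $F_1=F_2=\cl_M(F_1)\cap(S\setminus x)$; moreover it sends $\mcZ'(M\del x)$ into $\mcZ'(M)$ (nonemptiness and properness are preserved), and by the identities above the nullity is never decreased. Summing over $F\in\mcZ'(M\del x)$ gives $\eta(\mcZ'(M\del x))\leq\eta(\mcZ'(M))$, and the contraction step is analogous by duality. The main bookkeeping obstacle I expect is in the contraction case analysis: matching the two subcases (whether the cyclic flat of $M$ is $F$ or $F\cup y$) with the loop-versus-non-loop split for $y$, so that the $-1$ appears exactly where claimed.
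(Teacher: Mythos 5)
The paper offers no proof of this lemma (it is stated as ``easy to prove''), so there is nothing to compare against; your argument is correct and is the standard one. The deletion case is handled exactly right (flatness forces $\cl_M(F)\subseteq F\cup x$, cyclicity of $F\cup x$ comes from a circuit through $x$ inside $F\cup x$, and the rank equalities give the nullity shift), the contraction case follows cleanly by duality with the loop case isolated correctly, and the final inequality follows from your observation that $F\mapsto\cl_M(F)$ (resp.\ $F\mapsto F$ or $F\cup y$) is an injection of $\mcZ'$ into $\mcZ'$ that never decreases nullity, iterated over single-element deletions and contractions.
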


While a cyclic flat of a matroid may give rise to cyclic flats in its
restrictions, the next lemma identifies a situation in which this does
not happen.

\begin{lemma}\label{lem:del}
  Let $Z$ be a cyclic flat of $M$.  If a subset $U$ of $Z$ with
  $|U|\geq \eta(Z)$ is contained in all nonempty cyclic flats that are
  contained in $Z$, then $Z-U$ is independent.
\end{lemma}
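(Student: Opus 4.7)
The plan is to argue by contradiction: assume $Z-U$ is dependent and manufacture a cyclic flat contained in $Z$ whose nullity exceeds $|U|$, contradicting the hypothesis $|U|\geq \eta(Z)$ together with monotonicity of nullity.

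First, let $C^*$ be the union of all circuits of $M$ that are contained in $Z-U$; since $Z-U$ is dependent, $C^*$ is nonempty. I would then show that $\cl_M(C^*)$ is a nonempty cyclic flat of $M$ contained in $Z$. Being a union of circuits, $C^*$ is a cyclic set; any element $x\in \cl_M(C^*)-C^*$ lies on a circuit contained in $C^*\cup\{x\}\subseteq \cl_M(C^*)$, so $\cl_M(C^*)$ itself is a union of circuits, i.e., a cyclic flat. Since $C^*\subseteq Z-U\subseteq Z$ and $Z$ is a flat, $\cl_M(C^*)\subseteq Z$.

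Next I would invoke the hypothesis: $\cl_M(C^*)$ is a nonempty cyclic flat contained in $Z$, so $U\subseteq \cl_M(C^*)$. Since $C^*\subseteq Z-U$, the sets $C^*$ and $U$ are disjoint, so
\[
|\cl_M(C^*)|\;\geq\; |C^*|+|U|,
\qquad
r_M(\cl_M(C^*))\;=\;r_M(C^*).
\]
Subtracting and using $\eta_M(C^*)\geq 1$ (because $C^*$ contains a circuit) gives
\[
\eta_M(\cl_M(C^*))\;\geq\;\eta_M(C^*)+|U|\;\geq\;|U|+1.
\]

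Finally, nullity is monotone under inclusion (adding one element increases size by $1$ and rank by $0$ or $1$), so $\cl_M(C^*)\subseteq Z$ gives $\eta_M(\cl_M(C^*))\leq \eta_M(Z)\leq |U|$, contradicting the previous inequality. Hence $Z-U$ is independent. The only step that requires any care is the first, verifying that the closure of a cyclic set is still a cyclic flat; everything else is a short bookkeeping calculation with nullities.
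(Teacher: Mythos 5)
Your proof is correct and follows essentially the same route as the paper's: assume $Z-U$ is dependent, observe that the closure of the circuits it contains is a nonempty cyclic flat inside $Z$ and hence must contain $U$, and then a nullity count contradicts $|U|\geq\eta(Z)$. The paper phrases the final count via $\eta(Z)\geq\eta(Z-U)+|U|$ using a single circuit rather than the union $C^*$, but this is only a cosmetic difference.
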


\begin{proof}
  Assume, to the contrary, that $Z-U$ contains some circuit $C$.  The
  nonempty cyclic flat $\cl(C)$ is contained in $Z$, so $U\subseteq
  \cl(C)$.  Thus $U\subseteq \cl(Z-U)=Z$.  Now $\eta(Z-U)>0$ and
  $U\subseteq \cl(Z-U)$ give $\eta(Z)>|U|$, contrary to the assumed
  inequality.
\end{proof}

For matroids $M_1$ on $S_1$ and $M_2$ on $S_2$, a matroid $M$ on
$S_1\cup S_2$ with $M|S_1=M_1$ and $M|S_2=M_2$ is called an
\emph{amalgam} of $M_1$ and $M_2$.

An element $x$ in a matroid $M$ is \emph{free in} $M$ if $M = (M\del
x)+x$.  Dually, an element $y$ is \emph{cofree in} $M$ if $M =
(M/y)\times y$.  Let $FI(M)$ be the set of all elements of $M$ that
are in no proper cyclic flat of $M$; thus, $FI(M)$ consists of the
free elements and isthmuses of $M$.  Note that $FI(M^*)$ is the
intersection of all nonempty cyclic flats of $M$; it consists of the
cofree elements and loops of $M$.

\section{Intertwines}\label{sec:inter}

We now construct the matroids of interest.  The notation established
in this paragraph is used in the rest of the paper.  Assume the
matroids $M_1$ and $M_2$ have positive rank and are defined on
disjoint ground sets, $S_1$ and $S_2$, respectively.  Let $r_1$ and
$r_2$ be their rank functions, and let $\eta_1$ and $\eta_2$ be their
nullity functions.  Fix subsets $S'_1$ of $S_1$ and $S'_2$ of $S_2$,
an integer $k$ with
\begin{equation}\label{ineq:k}
  k\geq r(M_1)+\eta_1(S'_1) + r(M_2) + \eta_2(S'_2),
\end{equation}
and sets $T_1$ and $T_2$ with
\begin{equation}\label{eq:|T_i|}
  |T_1|=k-r(M_1)-|S'_2| \qquad \text{and} \qquad |T_2|=k-r(M_2)-|S'_1|
\end{equation}
where $T_1$, $T_2$, and $S_1\cup S_2$ are mutually disjoint.  Let
$$\mcZ = \mcZ'\bigl(M_1\times (T_1\cup S'_2)\bigr) \cup
\mcZ'\bigl(M_2\times (T_2\cup S'_1)\bigr)\cup \{\emptyset,S_1\cup
S_2\cup T_1\cup T_2\}.$$ (Note that inequality~(\ref{ineq:k}) gives
$|T_1|+|S'_2|\geq \eta_1(S'_1)+r(M_2)+\eta_2(S'_2)$, which is
positive; therefore $M_1\times (T_1\cup S'_2)$ is a proper coextension
of $M_1$ and so has no loops.  Likewise $M_2\times (T_2\cup S'_1)$ has
no loops.  Thus, the least cyclic flat of these matroids is
$\emptyset$.)  Define $r:\mcZ\rightarrow \mathbb{Z}$ by
\begin{enumerate}
\item $r(F\cup T_1\cup S'_2) = r_1(F)+|T_1|+|S'_2|$ for $F\in
  \mcZ'(M_1)$,
\item $r(F\cup T_2\cup S'_1) = r_2(F)+|T_2|+|S'_1|$ for $F\in
  \mcZ'(M_2)$,
\item $r(\emptyset) = 0$, and
\item $r(S_1\cup S_2\cup T_1\cup T_2)=k$.
\end{enumerate}

\begin{thm}\label{thm:propshold}
  The pair $(\mcZ,r)$ satisfies properties (Z0)-(Z3) of
  Proposition~\ref{prop:axioms}.  The rank-$k$ matroid $M$ on $S_1\cup
  S_2\cup T_1\cup T_2$ thus defined has the following properties:
  \begin{itemize}
  \item[(i)] $M$ is an amalgam of $M_1\times (T_1\cup S'_2)$ and
    $M_2\times (T_2\cup S'_1)$, and
  \item[(ii)] $\eta_1(F) = \eta_M(F\cup T_1\cup S'_2)$ for $F\in
    \mcZ'(M_1)$ and $\eta_2(F) = \eta_M(F\cup T_2\cup S'_1)$ for $F\in
    \mcZ'(M_2)$.
  \end{itemize}
\end{thm}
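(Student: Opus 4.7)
The plan is to verify axioms (Z0)--(Z3) of Proposition~\ref{prop:axioms} in turn, reducing same-family comparisons to the fact that $M_1\times(T_1\cup S'_2)$ and $M_2\times(T_2\cup S'_1)$ are already matroids, and handling cross-family cases by direct computation; parts~(i) and~(ii) will then follow from the cyclic-flat description. The key structural observation used throughout is that every set in $\mcZ'\bigl(M_1\times(T_1\cup S'_2)\bigr)$ contains $T_1\cup S'_2$, every set in $\mcZ'\bigl(M_2\times(T_2\cup S'_1)\bigr)$ contains $T_2\cup S'_1$, and $T_1\cap T_2=\emptyset$.

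For (Z0), this observation forces two elements from different families to be incomparable (when $T_1,T_2\neq\emptyset$), so their meet in $\mcZ$ is $\emptyset$ and their join is the top, while meets and joins within one family are inherited from the ambient cyclic-flat lattice and clip to $\emptyset$ or the top of $\mcZ$ if they leave $\mcZ'$. (Z1) is immediate from $r(\emptyset)=0$. For (Z2), same-family inequalities come for free from the ambient matroid once one notes that $r(M_i)+|T_i|+|S'_{3-i}|=k$ by~(\ref{eq:|T_i|}); comparisons with $\emptyset$ or with the top are direct substitutions; and the cross-family comparable case would require $T_1$ or $T_2$ to be empty, in which case~(\ref{ineq:k}) forces $S'_1$ or $S'_2$ to be independent in its matroid, leaving no proper nonempty cyclic flat of $M_i$ inside.

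The heart of the proof is (Z3) for a cross-family pair $X=F_1\cup T_1\cup S'_2$ and $Y=F_2\cup T_2\cup S'_1$: then $X\meet Y=\emptyset$, $X\join Y$ is the top, $(X\cap Y)-(X\meet Y)=(F_1\cap S'_1)\cup(F_2\cap S'_2)$, and after substituting~(\ref{eq:|T_i|}) the required inequality becomes
$$k\geq \bigl(r(M_1)-r_1(F_1)+|F_1\cap S'_1|\bigr)+\bigl(r(M_2)-r_2(F_2)+|F_2\cap S'_2|\bigr).$$
Monotonicity of rank and nullity gives $r_i(F_i)+\eta_i(S'_i)\geq|F_i\cap S'_i|$, and combining this with~(\ref{ineq:k}) closes the case. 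This is the only place where~(\ref{ineq:k}) is genuinely needed, and identifying the right monotonicity bound on $|F_i\cap S'_i|$ is where I expect the main obstacle.

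With (Z0)--(Z3) in hand, (ii) follows from the one-line cancellation $\eta_M(F\cup T_1\cup S'_2)=|F|-r_1(F)=\eta_1(F)$ and the symmetric statement for the second family. For (i), I would apply the rank formula~(\ref{rankbycyclicflats}) to $M$ and show that for $Y\subseteq S_1\cup T_1\cup S'_2$ the minimum defining $r_M(Y)$ equals that defining $r_{M_1\times(T_1\cup S'_2)}(Y)$: first-family terms and the $\emptyset$ and top terms match term by term, and for a second-family flat $F=F_2\cup T_2\cup S'_1$ the bound $|Y\cap F|\leq|Y\cap S'_1|+|Y\cap S'_2\cap F_2|$ combined with~(\ref{ineq:k}) shows that $r(F)+|Y-F|$ cannot undercut the first-family minimum; the claim for $M_2\times(T_2\cup S'_1)$ is symmetric.
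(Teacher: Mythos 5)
Your proposal is correct and follows essentially the same route as the paper: same-family instances of (Z0)--(Z3) are inherited from the ambient free coextensions, the cross-family (Z3) case reduces via~(\ref{eq:|T_i|}) to exactly the inequality you state and is closed by the bound $|F_i\cap S'_i|\leq r_i(F_i)+\eta_i(S'_i)$ together with~(\ref{ineq:k}), and (ii) is the same one-line cancellation. For (i) the paper reduces instead to showing that $S'_1\cup S'_2$ is independent in $M$ (so that second-family cyclic flats leave no trace in the restriction to $S_1\cup T_1\cup S'_2$), but this rests on the identical inequality as your rank-formula comparison, so the two arguments are interchangeable.
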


\begin{proof}
  Property (Z0) holds since any pair of sets in $\mcZ$ that does not
  have a join in one of $\mcZ'\bigl(M_1\times (T_1\cup S'_2)\bigr)$
  and $\mcZ'\bigl(M_2\times (T_2\cup S'_1)\bigr)$ has $S_1\cup S_2\cup
  T_1\cup T_2$ as the join.  Property (Z1) is item (3) above.
  Property (Z2) follows from this property in $\mcZ\bigl(M_1\times
  (T_1\cup S'_2)\bigr)$ and $\mcZ\bigl(M_2\times (T_2\cup
  S'_1)\bigr)$, as do all instances of property (Z3) except in the
  case $X = F_1\cup T_1\cup S'_2$ with $F_1\in \mcZ(M_1)$ and $Y =
  F_2\cup T_2\cup S'_1$ with $F_2\in \mcZ(M_2)$.  In this case, the
  required inequality is
  $$r_1(F_1)+|T_1|+|S'_2| + r_2(F_2)+|T_2|+|S'_1|\geq k +
  |F_1\cap S'_1| + |F_2\cap S'_2|,$$ which follows from
  inequality~(\ref{ineq:k}) and equations~(\ref{eq:|T_i|}).

  By symmetry, assertion (i) follows if we show that for any $F\in
  \mcZ'(M_2)$, the difference $(F\cup T_2\cup
  S'_1)-(T_2\cup(S_2-S'_2))$ is independent in $M|S_1\cup T_1\cup
  S'_2$.  All such differences are contained in $S'_1\cup S'_2$, so it
  suffices to show that $S'_1\cup S'_2$ is independent in $M$.  To
  show this, by equation~(\ref{rankbycyclicflats}) it suffices to
  prove $$r_M(Z) + |(S'_1\cup S'_2)-Z|\geq |S'_1|+|S'_2|$$ for all
  $Z\in \mcZ(M)$; again by symmetry, it suffices to show this for
  $Z=F_1\cup T_1\cup S'_2$ with $F_1\in \mcZ'(M_1)$.  For such $Z$,
  the required inequality is
  $$r_1(F_1) + |T_1|+|S'_2| + |S'_1-F_1|\geq |S'_1|+|S'_2|,$$ or,
  using equations~(\ref{eq:|T_i|}) and manipulating,
  $$k \geq r(M_1) + |S'_1\cap F_1| - r_1(F_1)
  + |S'_2|.$$ This inequality follows from inequality~(\ref{ineq:k})
  since $|S'_2|\leq r(M_2)+\eta_2(S'_2)$ and
  \begin{align*}
    |S'_1\cap F_1| - r_1(F_1) \leq & \, |S'_1\cap F_1| -
    r_1(S'_1\cap F_1) \\
    = & \, \eta_1(S'_1\cap F_1) \\
    \leq & \, \eta_1(S'_1).
  \end{align*}
   
 Assertion (ii) is evident.
\end{proof}

The matroid so constructed depends on $M_1$, $M_2$, $k$, $S'_1$,
$S'_2$, $T_1$, and $T_2$.  If (as in the next result) listing all
parameters aids clarity, we use $M_k(M_1,S'_1,T_1;M_2,S'_2,T_2)$ to
denote this matroid; otherwise we simply write $M$.

The next result, which follows by comparing the cyclic flats and their
ranks, shows that combining the construction with duality yields other
instances of the same construction.

\begin{thm}\label{thm:dual}
  With $j=|S_1|+|S_2|+|T_1|+|T_2|-k$, we have
  $$\bigl(M_k(M_1,S'_1,T_1;M_2,S'_2,T_2)\bigr)^* =
  M_j(M^*_1,S_1-S'_1,T_2;M^*_2,S_2-S'_2,T_1).$$ Also, $j\geq
  r(M^*_1)+\eta_{M^*_1}(S_1-S'_1) + r(M^*_2) + \eta_{M^*_2}(S_2-S'_2)$
  if and only if $k$ satisfies inequality~(\ref{ineq:k}).
\end{thm}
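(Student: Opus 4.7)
My plan is to apply property~(3) of Section~\ref{sec:pre}: since a matroid is determined by its cyclic flats and their ranks, I would show that the right-hand side $N := M_j(M^*_1,S_1-S'_1,T_2;M^*_2,S_2-S'_2,T_1)$ has the same cyclic flats with the same ranks as $M^* := \bigl(M_k(M_1,S'_1,T_1;M_2,S'_2,T_2)\bigr)^*$. Before invoking the construction to produce $N$, I would confirm that $N$ is well-defined, i.e., that the analogues of~(\ref{eq:|T_i|}) hold with the roles of $T_1$ and $T_2$ swapped; this is an easy arithmetic check using the defining equations for $|T_1|$ and $|T_2|$.

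To match the cyclic flats, I would apply property~(1) twice. First, applied to $M$, it translates the explicit list of cyclic flats of $M$ into those of $M^*$: setting $S=S_1\cup S_2\cup T_1\cup T_2$, they are $\emptyset$, $S$, the sets $(S_1-F)\cup(S_2-S'_2)\cup T_2$ for $F\in\mcZ'(M_1)$, and the sets $(S_1-S'_1)\cup(S_2-F)\cup T_1$ for $F\in\mcZ'(M_2)$. Second, applied to each $M_i$, it gives $\mcZ'(M^*_i)=\{S_i-F:F\in\mcZ'(M_i)\}$, so unpacking the definition of $N$ yields exactly the same collection of sets.

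For the ranks, I would use the identity $r_{M^*}(A)=|A|-k+r_M(S-A)$ to compute, for $F\in\mcZ'(M_1)$, that $r_{M^*}((S_1-F)\cup(S_2-S'_2)\cup T_2) = j-\eta_1(F)$. Expanding the rank of the same set in $N$ directly from the construction, and substituting $r_{M^*_1}(S_1-F)=|S_1|-|F|-r(M_1)+r_1(F)$ together with $|T_1|=k-r(M_1)-|S'_2|$, yields the same value; the other family is symmetric, and the ranks on $\emptyset$ and $S$ are immediate. For the final sentence of the theorem I would observe that the dual rank formula gives $r(M^*_i)+\eta_{M^*_i}(S_i-S'_i)=|S_i|-r_i(S'_i)$, and then, using $|T_1|+|T_2|=2k-r(M_1)-r(M_2)-|S'_1|-|S'_2|$, verify that $k-[r(M_1)+\eta_1(S'_1)+r(M_2)+\eta_2(S'_2)]$ equals $j-[r(M^*_1)+\eta_{M^*_1}(S_1-S'_1)+r(M^*_2)+\eta_{M^*_2}(S_2-S'_2)]$, which gives the equivalence of the two inequalities.

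The only real obstacle is bookkeeping: keeping straight the swaps $T_1\leftrightarrow T_2$ and $S'_i\leftrightarrow S_i-S'_i$, and systematically reducing each computation using the two defining equations in~(\ref{eq:|T_i|}). Once these substitutions are in place, every identity collapses to elementary algebra.
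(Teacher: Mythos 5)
Your proposal is correct and is exactly the argument the paper intends: the paper gives no details beyond ``follows by comparing the cyclic flats and their ranks,'' and your computation carries out precisely that comparison (using $\mcZ(M^*)=\{S-F: F\in\mcZ(M)\}$, the dual rank formula, and the defining equations~(\ref{eq:|T_i|})), together with the correct arithmetic for the final equivalence. No gaps.
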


We now treat the main result.  A similar but somewhat longer argument
would modestly increase the range for $k$; we opt for the shorter
proof since the main interest is in having infinitely many
intertwines.  Recall that $FI(M)$ is the set of free elements and
isthmuses of $M$, so $FI(M^*)$ is the set of cofree elements and loops
of $M$.

\begin{thm}\label{thm:inter}
  Assume that the ground sets $S_1$ and $S_2$ of $M_1$ and $M_2$ are
  disjoint and that no matroid isomorphic to $M_1$ (resp., $M_2$) can
  be obtained from $M_2$ (resp., $M_1$) by any combination of minors,
  free extensions, and free coextensions.  For $i\in \{1,2\}$, fix a
  set $S'_i$ with $FI(M_i)\subseteq S'_i\subseteq S_i-FI(M_i^*)$.  If
  $k\geq 4\max\{|S_1|,|S_2|\}$, then the matroid $M$ defined above is
  an intertwine of $M_1$ and $M_2$.
\end{thm}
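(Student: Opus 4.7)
The plan is to verify the two defining conditions of an intertwine separately. For the easy direction — that $M$ has both $M_1$- and $M_2$-minors — I would appeal to Theorem~\ref{thm:propshold}(i): since $M|(S_1\cup T_1\cup S'_2) = M_1\times(T_1\cup S'_2)$ and since contracting the coextension set of a free coextension recovers the original matroid, $M/(T_1\cup S'_2)\del(T_2\cup(S_2-S'_2)) = M_1$; the $M_2$-minor follows by symmetry.

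The hard direction is that no proper minor of $M$ has both minors. Suppose $N = M/A\del B$ is a proper minor (so $A\cup B\neq\emptyset$) with minors $N_i\cong M_i$ expressed as $N_i = M/A_i\del B_i$ with $A\subseteq A_i$, $B\subseteq B_i$; set $E_i = E(M)-A_i-B_i$, so $|E_i|=|S_i|$. My strategy proceeds in three steps. Step 1 (\emph{localization}): show $E_i\subseteq S_i\cup T_i\cup S'_{3-i}$. The idea is that if, say, $E_1$ meets $(S_2\cup T_2)-(S'_1\cup S'_2)$, then the derivation of $N_1$ from $M$ can be reorganized as extracting $M_1$ from (a minor of) $M_2\times(T_2\cup S'_1)$ via free extensions (on unused $M_1$-side elements, which act freely after appropriate contractions) and free coextensions (on contracted elements of $T_i$, which act cofreely). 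Since $M_2\times(T_2\cup S'_1)$ is itself a free coextension of $M_2$, this would exhibit $M_1$ as obtained from $M_2$ by minors and free (co)extensions, contradicting the hypothesis. Step 2 (\emph{pinning}): the condition $FI(M_i)\subseteq S'_i\subseteq S_i-FI(M_i^*)$ guarantees that no element of $S_i$ essential to $M_i$'s cyclic-flat structure (those not in $FI(M_i)$ and not loops or cofree) can be contracted or deleted without destroying the $M_i$-minor, so $E_i$ contains $S_i$ up to possible adjustments within $S'_i$. Step 3 (\emph{counting}): the cardinality bound $k\geq 4\max\{|S_1|,|S_2|\}$ makes $|T_1|,|T_2|$ large enough that $E_1\cup E_2$ must cover all of $T_1\cup T_2$; combined with Step~2, this forces $A\cup B = \emptyset$, contradicting the choice of $N$.

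The principal obstacle is the localization step. Making "reorganizing as a derivation from $M_2$" rigorous requires, for each crossover configuration, identifying which $T_i$-elements and $S'_i$-elements play the role of free extensions versus free coextensions relative to the candidate $M_1$-minor, and then chasing them through the hypothesis using Lemma~\ref{lem:cy} and Lemma~\ref{lem:del}. The bound $k\geq 4\max\{|S_1|,|S_2|\}$ provides the quantitative surplus ensuring every crossover admits such a reorganization; with less room in $T_1$ and $T_2$, some crossover configurations might require operations outside the allowed list of minors, free extensions, and free coextensions, and the argument would break down.
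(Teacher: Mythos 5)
There is a genuine gap: the proposal identifies the right hypotheses to exploit but does not supply the two mechanisms that actually make the argument work, and its final counting step is arithmetically impossible. Concerning Step~3: the ground sets $E_1,E_2$ of the putative minors have only $|S_1|+|S_2|$ elements in total, while $|T_1\cup T_2|\geq 2k-r(M_1)-r(M_2)-|S'_1|-|S'_2|$ is far larger under $k\geq 4\max\{|S_1|,|S_2|\}$, so ``$E_1\cup E_2$ must cover all of $T_1\cup T_2$'' cannot hold and cannot force $A\cup B=\emptyset$. The correct use of the size of $T_1$ is the opposite one: since $|T_1|\geq r(M_1^*)+r(M_2)+|S_i|+1$, any $M_i$-minor of $M\del a$ must \emph{remove} most of $T_1$, so either at least $r(M_1^*)$ elements of $T_1$ are deleted or at least $r(M_2)$ elements of $T_1$ are contracted. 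This dichotomy is what makes your ``localization'' rigorous, via Lemma~\ref{lem:del}: deleting $r(M_1^*)=\eta_M(S_1\cup T_1\cup S'_2)$ elements of $T_1$ renders every $M_1$-side cyclic flat independent, so the resulting matroid is \emph{literally} $\bigl(M_2\times(T_2\cup S'_1)\bigr)+\bigl((S_1-S'_1)\cup(T_1-X)\bigr)$, a free extension of a free coextension of $M_2$, which by hypothesis has no $M_1$-minor; dually for heavy contraction. Your sketch of ``reorganizing the derivation'' gestures at this but never produces the identification of the reduced matroid as a free extension/coextension, which is the whole content of the step.

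The second missing mechanism is the invariant that finishes the proof. Your Step~2 claims the condition $FI(M_i)\subseteq S'_i\subseteq S_i-FI(M_i^*)$ ``pins'' $S_i$ inside $E_i$, but that is not what it does (and is not needed). Its actual role: it guarantees that each $a\in S_1-S'_1$ lies in some proper cyclic flat of $M/(Y\cap T_1)$ and each $a\in S'_1$ lies in every proper cyclic flat of $M\del(X\cap T_1)$, so that removing $a$ strictly decreases the quantity $\eta(\mcZ'(\cdot))$, the sum of nullities of nonempty proper cyclic flats. By Lemma~\ref{lem:cy} this quantity is monotone under minors, and by Theorem~\ref{thm:propshold}(ii) it equals $\eta(\mcZ'(M_i))$ exactly when it should; once it drops below that value no further minor can be isomorphic to $M_i$. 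Without this (or some substitute) invariant you have no way to conclude that the element $a$ being removed actually destroys the relevant minor. Finally, note that the whole argument is cleaner if you reduce at the outset to single-element deletions and contractions $M\del a$, $M/a$ (which suffices by the definition of intertwine) and then to $M\del a$ with $a\in S_1\cup T_1$ by symmetry and Theorem~\ref{thm:dual}; working with a general proper minor $N=M/A\del B$ as you do adds bookkeeping without gaining anything.
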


\begin{proof}
  Theorem~\ref{thm:propshold} part (i) shows that $M_1$ and $M_2$ are
  minors of $M$.  By symmetry, to prove that $M$ is an intertwine, it
  suffices to show that for $a\in S_1\cup T_1$, neither $M\del a$ nor
  $M/a$ has both an $M_1$-minor and an $M_2$-minor; furthermore, by
  Theorem~\ref{thm:dual} and the observation that the hypotheses are
  invariant under duality, it suffices to treat $M\del a$.  Now
  $|T_1|\geq r(M_1^*)+r(M_2)+|S_i|+1$ since $k\geq
  4\max\{|S_1|,|S_2|\}$.  If $M\del a\del X /Y\simeq M_i$ with
  $i\in\{1,2\}$, then $M\del a\del X /Y$ has $|S_i|$ elements, at
  least $|T_1| - |X\cap T_1| - |Y\cap T_1| -1$ of which are in $T_1$,
  so $|X\cap T_1| + |Y\cap T_1| \geq |T_1|-|S_i|-1$, and therefore
  $$|X\cap T_1| + |Y\cap T_1| \geq r(M^*_1)+r(M_2).$$ Thus, either (i)
  $|X\cap T_1|\geq r(M^*_1)$ or (i$^*$) $|Y\cap T_1|\geq r(M_2)$.

  We claim that the three conclusions below follow when inequality (i)
  holds:
  \begin{enumerate}
  \item[(1)] $M\del (X\cap T_1)=\bigl(M_2\times(T_2\cup
    S'_1)\bigr)+\bigl((S_1-S'_1)\cup(T_1-X)\bigr)$,
  \item[(2)] $M\del (X\cap T_1)$ has no $M_1$-minor, and
  \item[(3)] $\eta_2(F) = \eta_{M\del (X\cap T_1)}(F\cup T_2\cup
    S'_1)$ for $F\in \mcZ'(M_2)$.
  \end{enumerate}
  Item (1) holds since, using Lemma~\ref{lem:del}, we get that the
  proper cyclic flats and their ranks in the two matroids agree.  Item
  (1) and the hypotheses give item (2).  Item (3) is immediate.

  Inequalities (i) and (i$^*$) are related by duality, so
  Theorem~\ref{thm:dual} and the results in the last paragraph give
  the following conclusion if inequality (i$^*$) holds:
  \begin{enumerate}
  \item[(1$^*$)] $M/(Y\cap T_1)=\bigl(M_1\times\bigl((T_1-Y)\cup
    S'_2\bigr)\bigr)+\bigl((S_2-S'_2)\cup T_2\bigr)$,
  \item[(2$^*$)] $M/(Y\cap T_1)$ has no $M_2$-minor, and
  \item[(3$^*$)] $\eta_1(F) = \eta_{M/(Y\cap T_1)}\bigl(F\cup
    (T_1-Y)\cup S'_2\bigr)$ for $F\in \mcZ'(M_1)$.
  \end{enumerate}
 
  For $a\in (S_1-S'_1)\cup T_1$, assume $M\del a$ has an $M_1$-minor,
  say $M\del a \del X/Y$.  By item (2), inequality (i$^*$) holds.
  Since $a$ is in at least one set in $\mcZ'(M/(Y\cap T_1))$, item
  (3$^*$) gives $\eta\bigl(\mcZ'(M/(Y\cap T_1)\del a)\bigr)<
  \eta(\mcZ'(M_1))$; the contradiction $M\del a\del X/Y \not \simeq
  M_1$ now follows from Lemma~\ref{lem:cy}.

  Lastly, for $a\in S'_1$, assume $M\del a$ has an $M_2$-minor, say
  $M\del a \del X/Y$.  Inequality (i) holds by item (2$^*$).  Now
  $a\in S'_1$ gives $\eta\bigl(\mcZ'(M\del (X\cap T_1)\del a)\bigr)<
  \eta(\mcZ'(M_2))$, which, with Lemma~\ref{lem:cy}, gives the
  contradiction $M\del a\del X/Y \not \simeq M_2$.
\end{proof}

Assume $FI(M_i)=\emptyset=FI(M_i^*)$ for $i\in \{1,2\}$.  Reflecting
on the proof above shows that $a\in S_1$ if and only if neither $M\del
a$ nor $M/a$ has an $M_1$-minor, and likewise for $S_2$ and $M_2$.
These conclusions and the structure of the cyclic flats of $M$ show
that the counterparts of the sets $S_1$, $S_2$, $T_1$, $T_2$, $S'_1$,
and $S'_2$ can be determined from any matroid that is isomorphic to
$M$.  This gives the following result.

\begin{cor}
  Assume $FI(M_i)=\emptyset=FI(M_i^*)$ for $i\in \{1,2\}$.  The
  construction gives at least $(|S_1|+1)(|S_2|+1)$ nonisomorphic
  rank-$k$ intertwines of $M_1$ and $M_2$ for each integer $k\geq
  4\max\{|S_1|,|S_2|\}$.  If, in addition, both $M_1$ and $M_2$ have
  trivial automorphism groups, then the construction yields
  $2^{|S_1|+|S_2|}$ nonisomorphic rank-$k$ intertwines.
\end{cor}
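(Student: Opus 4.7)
The plan is to leverage the recoverability statement immediately preceding the corollary: under the assumption $FI(M_i)=\emptyset=FI(M_i^*)$, the subsets $S_1,S_2,T_1,T_2,S'_1,S'_2$ can be intrinsically identified inside any matroid isomorphic to $M$. So the strategy is to show that (a) the construction admits $2^{|S_1|+|S_2|}$ valid parameter choices, and (b) certain features of these choices are isomorphism invariants of the resulting matroids.

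First I would verify the parameter space. The condition $FI(M_i)\subseteq S'_i\subseteq S_i-FI(M_i^*)$ of Theorem~\ref{thm:inter} collapses under our hypothesis to "$S'_i$ is an arbitrary subset of $S_i$", giving $2^{|S_1|}\cdot 2^{|S_2|}$ pairs $(S'_1,S'_2)$. For each such pair, I would check that the bound $k\geq 4\max\{|S_1|,|S_2|\}$ is enough to guarantee inequality~(\ref{ineq:k}), using the trivial estimates $r(M_i)\leq|S_i|$ and $\eta_i(S'_i)\leq|S'_i|\leq|S_i|$, which sum to at most $2|S_1|+2|S_2|\leq 4\max\{|S_1|,|S_2|\}$. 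Hence each of the $2^{|S_1|+|S_2|}$ parameter choices yields a valid rank-$k$ intertwine by Theorem~\ref{thm:inter}.

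For the first assertion, I would argue that since any matroid isomorphism preserves cardinalities, the recoverability of $S'_1$ and $S'_2$ (as subsets of the ground set of an isomorph of $M$) makes the pair $(|S'_1|,|S'_2|)$ an isomorphism invariant of the construction. As $(|S'_1|,|S'_2|)$ ranges over $\{0,1,\dots,|S_1|\}\times\{0,1,\dots,|S_2|\}$, this gives at least $(|S_1|+1)(|S_2|+1)$ pairwise nonisomorphic rank-$k$ intertwines. For the second assertion, I would suppose that an isomorphism carries $M_k(M_1,S'_1,T_1;M_2,S'_2,T_2)$ to $M_k(M_1,S''_1,T_1;M_2,S''_2,T_2)$; since the intrinsic copies of $S_i$ in both matroids support copies of $M_i$, the isomorphism restricts to an automorphism of $M_i$ mapping $S'_i$ to $S''_i$, and triviality of $\mathrm{Aut}(M_i)$ then forces $S'_i=S''_i$. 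Hence all $2^{|S_1|+|S_2|}$ parameter choices produce distinct isomorphism classes.

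The only real obstacle is pinning down the canonical identification used in the last step: one must be careful that the recovered subset $S_i\subseteq E(M)$ together with $M|S_i\cong M_i$ pins down a well-defined element of $\mathrm{Aut}(M_i)$ (up to the initial identification of $M|S_i$ with $M_i$), so that triviality of $\mathrm{Aut}(M_i)$ indeed fixes $S'_i$ pointwise. This is essentially bookkeeping and follows directly from the preceding paragraph, so no new matroid-theoretic input is needed.
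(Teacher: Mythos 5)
Your proof is correct and takes essentially the same route as the paper: the paper's entire argument for this corollary is the paragraph preceding it, which asserts that $S_1$, $S_2$, $T_1$, $T_2$, $S'_1$, $S'_2$ are recoverable from any matroid isomorphic to $M$, and then leaves the counting implicit. Your added details --- that $k\geq 4\max\{|S_1|,|S_2|\}$ validates all $2^{|S_1|+|S_2|}$ choices of $(S'_1,S'_2)$, that $(|S'_1|,|S'_2|)$ is an isomorphism invariant, and that trivial automorphism groups force $S'_i=S''_i$ --- are exactly the steps the paper intends the reader to supply.
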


Knowing more about $M_1$ and $M_2$ may suggest variations on the
construction that yield more intertwines, as we now illustrate.
Assume that in addition to satisfying the conditions in
Theorem~\ref{thm:inter}, neither $M_1$ nor $M_2$ has
circuit-hyperplanes.  Let $M$ be the intertwine constructed above.
From the bound on $k$ in Theorem~\ref{thm:inter} we get $|T_1\cup
T_2|>k$.  Let $\mcH$ be a collection of $k$-subsets of $T_1\cup T_2$
with $|H\cap H'|\leq k-2$ whenever $H$ and $H'$ are distinct sets in
$\mcH$.  In the construction, replace $\mcZ$ by $\mcZ\cup \mcH$ and
extend $r$ to $\mcZ\cup \mcH$ by setting $r(H)=k-1$ for all $H\in
\mcH$.  Properties (Z0)--(Z3) of Proposition~\ref{prop:axioms} are
easily verified.  Let $M'$ be the matroid thus constructed.  The sets
in $\mcH$ are the circuit-hyperplanes of $M'$.  By comparing the
cyclic flats and their ranks, it follows that if $M'\del X/Y$ has no
circuit-hyperplanes, then $M'\del X/Y = M\del X/Y$.  Since neither
$M_1$ nor $M_2$ has circuit-hyperplanes, it follows that if some
single-element deletion or contraction of $M'$ had both an $M_1$-minor
and an $M_2$-minor, then the same would be true of the corresponding
single-element deletion or contraction of $M$, contrary to
Theorem~\ref{thm:inter}.  Thus, $M'$ is an intertwine of $M_1$ and
$M_2$.  Thus, we have the following result.

\begin{thm}\label{thm:chvar}
  Assume $M_1$ and $M_2$ satisfy the hypotheses of
  Theorem~\ref{thm:inter} and neither has circuit-hyperplanes.  For
  each integer $n$, there is an integer $k_0$ so that if $k\geq k_0$,
  then $M_1$ and $M_2$ have at least $n$ intertwines of rank $k$.
\end{thm}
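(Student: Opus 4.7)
The plan is to build on the circuit-hyperplane variation described in the paragraph preceding the theorem statement. As explained there, every collection $\mcH$ of $k$-subsets of $T_1\cup T_2$ satisfying $|H\cap H'|\leq k-2$ for all distinct $H,H'\in\mcH$ produces, via the augmented pair $(\mcZ\cup\mcH,r)$, an intertwine $M'$ of $M_1$ and $M_2$ whose circuit-hyperplanes are exactly the members of $\mcH$.  Since the number of circuit-hyperplanes is a matroid invariant, families $\mcH$ of distinct sizes yield nonisomorphic intertwines.  Hence it suffices to show that for every $n$, once $k$ is sufficiently large, valid families $\mcH$ exist of each of the sizes $0,1,\ldots,n-1$.

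The crux is therefore a short counting estimate.  From equations~(\ref{eq:|T_i|}), $N:=|T_1\cup T_2|=2k-r(M_1)-r(M_2)-|S'_1|-|S'_2|$, so both $N$ and $N-k$ grow linearly with $k$.  For a fixed $k$-subset $H$ of $T_1\cup T_2$, the number of $k$-subsets $H'$ with $|H\cap H'|\geq k-1$ is $1+k(N-k)$, namely $H$ together with the sets obtained from $H$ by exchanging a single element.  A greedy construction therefore produces a valid family $\mcH$ of size at least $\binom{N}{k}\big/\bigl(1+k(N-k)\bigr)$.  Since $\binom{N}{k}$ grows exponentially in $k$ while the denominator is polynomial, this lower bound exceeds any prescribed threshold once $k$ is large enough.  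Moreover, any valid family has valid subfamilies of every smaller size, so as soon as a family of size $n-1$ exists, so do families of every intermediate size.

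Choosing $k_0$ to be at least $4\max\{|S_1|,|S_2|\}$ (to meet the hypothesis of Theorem~\ref{thm:inter}) and large enough that the greedy bound exceeds $n-1$ then completes the proof: for $k\geq k_0$, the families of sizes $0,1,\ldots,n-1$ yield $n$ pairwise nonisomorphic rank-$k$ intertwines.  The only part that could cause trouble is arranging that the intersection constraint $|H\cap H'|\leq k-2$ does not become too restrictive as $k$ grows, but the exponential versus polynomial comparison above shows it is not a genuine obstacle; the matroidal content has already been handled by the variation construction and by Theorem~\ref{thm:inter}, leaving only this quantitative estimate to verify.
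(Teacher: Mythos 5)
Your argument is correct and is essentially the paper's: you invoke the circuit-hyperplane variation exactly as set up in the paragraph preceding the theorem, distinguish the resulting intertwines by their numbers of circuit-hyperplanes, and reduce everything to showing that valid families $\mcH$ of each size $0,1,\dots,n-1$ exist once $k$ is large. The only point where you diverge is that last quantitative step: you establish the existence of a large family by a greedy (sphere-covering) estimate, noting that each chosen $H$ excludes only $1+k(N-k)$ other $k$-subsets and hence a valid family of size at least $\binom{N}{k}\big/\bigl(1+k(N-k)\bigr)$ exists, while the paper exhibits an explicit family by pairing off the elements of $T_1\cup T_2$ and taking each $H$ to be a union of $k/2$ pairs, so that distinct members automatically intersect in at most $k-2$ elements. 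Both yield exponentially many sets, vastly more than the $n-1$ required; your bound is marginally stronger asymptotically, the paper's is constructive and simpler to verify. Your bookkeeping ($N=2k-r(M_1)-r(M_2)-|S'_1|-|S'_2|$, so $N-k$ grows linearly in $k$; subfamilies of every smaller size exist; $\mcH=\emptyset$ returns the original intertwine $M$) is accurate, so the proof stands.
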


To take these ideas a step further, we give a simple proof that, as
$k$ grows, the number of nonisomorphic intertwines arising from the
variation on the construction grows at least exponentially.  To
simplify the discussion slightly, assume both $|T_1\cup T_2|$ and $k$
are even.  Let $\mbH$ be the set of all sets $\mcH$ of $k$-subsets of
$T_1\cup T_2$ such that $|H\cap H'|\leq k-2$ whenever $H$ and $H'$ are
distinct sets in $\mcH$.  One way to get a set $\mcH$ in $\mbH$ is to
pair off the elements in $T_1\cup T_2$ and, to get each set in $\mcH$,
choose $k/2$ pairs.  Even among sets $\mcH$ formed in this limited
way, their maximal size grows exponentially as a function of $k$ (much
as $\binom{2n}{n}$ grows exponentially as a function of $n$).  Among
all sets in $\mbH$, let $\mcH$ be one of maximal size.  Subsets of
$\mcH$ of different sizes give rise to nonisomorphic intertwines
(their numbers of circuit-hyperplanes differ), so these intertwines
demonstrate our claim.

This discussion and the last two results suggest several problems.
Let $i(k;M_1,M_2)$ denote the number of rank-$k$ intertwines of $M_1$
and $M_2$ up to isomorphism.  What can be said about $i(k;M_1,M_2)$?
If $M_1$ and $M_2$ satisfy the hypotheses of Theorem~\ref{thm:inter},
is $i(k;M_1,M_2)$ increasing as a function of $k$, at least for
sufficiently large $k$?  If so, under what conditions on $M_1$ and
$M_2$ is the difference $i(k+1;M_1,M_2) - i(k;M_1,M_2)$ bounded above
by a constant or by a polynomial?  Under what conditions does
$i(k;M_1,M_2)$ grow exponentially or super-exponentially?

A matroid $M$ is a \emph{labelled intertwine} of $M_1$ and $M_2$ if
$M$ but none of its proper minors has minors equal to $M_1$ and $M_2$.
We end this section by showing that weaker hypotheses than those in
Theorem~\ref{thm:inter} suffice for our construction to yield labelled
intertwines.

\begin{thm}\label{thm:label}
  Assume $S_1$ and $S_2$ are disjoint.  If inequality~(\ref{ineq:k})
  holds, neither $M_1$ nor $M_2$ is uniform, $\mcZ'(M_1) \ne\{S'_1\}$,
  and $\mcZ'(M_2) \ne\{S'_2\}$, then the matroid $M$ constructed above
  is a labelled intertwine of $M_1$ and $M_2$.
\end{thm}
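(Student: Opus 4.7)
The plan is to show $M$ has both $M_1$ on $S_1$ and $M_2$ on $S_2$ as labeled minors, and then verify that every single-element deletion and contraction of $M$ loses at least one of them. The first point is immediate from Theorem~\ref{thm:propshold}(i): since $M|(S_1\cup T_1\cup S'_2)=M_1\times(T_1\cup S'_2)$ and $(M_1\times X)/X=M_1$, contracting $T_1\cup S'_2$ and deleting $T_2\cup(S_2-S'_2)$ produces $M_1$ on $S_1$; the labeled minor $M_2$ on $S_2$ follows by symmetry.

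For the intertwine condition, the cases $a\in S_1\cup S_2$ are trivial, since then neither $M\del a$ nor $M/a$ retains the requisite ground set. Combining the $(M_1,M_2)$-symmetry of the construction with Theorem~\ref{thm:dual} (which presents $M^*$ as another instance of the construction, with $T_1,T_2$ interchanged and each $S'_i$ replaced by $S_i-S'_i$, and under which the hypotheses of the present theorem are clearly preserved), the remaining cases all reduce to the single core claim: for $a\in T_1$, the deletion $M\del a$ does not have $M_1$ on $S_1$ as a labeled minor.

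To establish this core claim, suppose toward contradiction that $M\del a\del X/Y=M_1$ on $S_1$ with $X\sqcup Y=(T_1-a)\cup T_2\cup S_2$, and set $X'=X\cup\{a\}$. Since $M_1$ is not uniform and $\mcZ'(M_1)\neq\{S'_1\}$, choose $F\in\mcZ'(M_1)$ with $F\neq S'_1$. Iterating Lemma~\ref{lem:cy} produces a cyclic flat $W\in\mcZ(M)$ satisfying $F\subseteq W\subseteq F\cup X'\cup Y$; since $F\subseteq S_1$ while $X'\cup Y$ is disjoint from $S_1$, this gives $W\cap S_1=F$, and inspecting the cyclic flats of $M$ listed in Section~\ref{sec:inter} (with the possibilities $F\in\{\emptyset,S'_1,S_1\}$ ruled out by the choice of $F$) forces $W=F\cup T_1\cup S'_2$. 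The same iteration of Lemma~\ref{lem:cy} yields the nullity identity
$$\eta_{M_1}(F)=\eta_M(W)-|W\cap X'|,$$
because each deletion of an element of $W-F$ is absorbed into the cyclic flat (dropping the nullity by $1$), while contractions preserve nullity ($M$ is loopless). Now $\eta_M(W)=\eta_1(F)$ by Theorem~\ref{thm:propshold}(ii), and $a\in T_1\cap X'\subseteq W\cap X'$ yields $|W\cap X'|\geq1$, producing $\eta_{M_1}(F)\leq\eta_1(F)-1$, which contradicts $\eta_{M_1}(F)=\eta_1(F)$.

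The main technical obstacle is justifying the nullity identity by carefully iterating Lemma~\ref{lem:cy} through the interleaved deletions and contractions: one must verify that the cyclic flat lifted from $F$ absorbs exactly the deleted elements of $W-F$, and no others.
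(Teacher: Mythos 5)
Your proof is correct and uses the same key mechanism as the paper's: lift a cyclic flat $F\in\mcZ'(M_1)-\{S'_1\}$ through the minor via Lemma~\ref{lem:cy}, identify its lift in $M$ as $F\cup T_1\cup S'_2$, and derive a contradiction from the nullity equation in Theorem~\ref{thm:propshold}(ii); the only differences are organizational (you reduce to single-element deletions/contractions and invoke Theorem~\ref{thm:dual}, where the paper instead pins down the unique labelled-minor presentation of $M_1$ and concludes directly). One small imprecision: your ``nullity identity'' should really be the inequality $\eta_{M_1}(F)\leq \eta_M(W)-|W\cap X'|$, since elements of $W\cap Y$ that become loops before being contracted could drop the nullity further --- but that is exactly the direction your argument needs, so the conclusion stands.
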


\begin{proof}
  By symmetry, to prove that no proper minor of $M$ has both $M_1$ and
  $M_2$ as minors, it suffices to show that if $M\del X/Y=M_1$, then
  $X=(S_2-S'_2)\cup T_2$ and $Y=T_1\cup S'_2$.  Thus, assume $M\del
  X/Y=M_1$.  Fix $F\in \mcZ'(M_1) -\{S'_1\}$.  By Lemma~\ref{lem:cy},
  the cyclic flat $F$ of $M\del X/Y$ must arise from the cyclic flat
  $F\cup T_1\cup S'_2$ of $M$; from Theorem~\ref{thm:propshold} part
  (ii), it follows that for $M\del X/Y$ to yield the same nullity on
  $F$ as in $M_1$, each element of $T_1\cup S'_2$ must be contracted;
  dually, each element of $(S_2-S'_2)\cup T_2$ must be deleted.  Thus,
  $X=(S_2-S'_2)\cup T_2$ and $Y=T_1\cup S'_2$.
\end{proof}

\section{Further Results}\label{sec:props}

Among the pairs of matroids that Theorem~\ref{thm:inter} applies to
are any two spikes of rank at least $4$, neither of which is a minor
of the other, provided that the one of smaller rank (if the ranks
differ) is not a free spike.  (We use the definition of spikes
in~\cite{jim}, which some sources call tip-less spikes. Free spikes
are the only spikes that can be obtained from a spike by minors along
with at least one lift or truncation.)  Thus, the assumption in the
construction in~\cite{jim} that each element is in a dependent
transversal is not needed here.  However, unlike the construction
in~\cite{jim}, the intertwine we get when $M_1$ and $M_2$ are spikes
is not a spike.

The construction here and that in~\cite{jim} give intertwines with
contrasting properties and so show that some properties that hold for
one construction need not hold for intertwines in general.  For
instance, the intertwines constructed here have neither small circuits
nor small cocircuits, but those constructed in~\cite{jim} have each
element in a many $4$-circuits and in many $4$-cocircuits.  Also, in
our construction the number of cyclic flats does not depend on the
rank, but in the construction in~\cite{jim} the number of cyclic flats
grows with the rank (as is true for the variation we discussed before
Theorem~\ref{thm:chvar}).

\subsection{Sizes of intertwines}
We show that the intertwines constructed above can exhibit the full
range of possible sizes for each rank.

\begin{thm}
  If $S$ is the ground set of a rank-$k$ intertwine of $M_1$ and
  $M_2$, then $$2k-r(M_1)-r(M_2)\leq |S| \leq 2k+r(M^*_1)+r(M^*_2).$$
  If $FI(M_i)=\emptyset=FI(M_i^*)$ for $i\in \{1,2\}$, then the
  construction in Section~\ref{sec:inter} gives intertwines of each
  cardinality in this range.
\end{thm}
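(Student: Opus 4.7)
The plan is to first derive the two inequalities on $|S|$ for an arbitrary rank-$k$ intertwine and then show the construction covers every integer in the resulting range. I would begin by fixing representations $M\del X_i/Y_i\simeq M_i$ and reducing to representations in which $X_i$ is $M$-coindependent: this is done by replacing $X_i$ by one of its $M$-cobases $X_i'$ and adjoining the excluded elements of $X_i$ (which are coloops of $M\del X_i'$) to $Y_i$. Once $X_i$ is coindependent, $r_M(S-X_i)=k$, and the minor-rank identity $r(M\del X_i/Y_i)=r_M(S-X_i)-r_M(Y_i)$ forces $r_M(Y_i)=k-r(M_i)$, hence $|Y_i|\ge k-r(M_i)$. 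Next I would invoke the intertwine hypothesis to conclude $Y_1\cap Y_2=\emptyset$: otherwise, contracting any $y$ in the intersection would yield a proper minor $M/y$ with $(M/y)\del X_i/(Y_i-y)=M\del X_i/Y_i\simeq M_i$ for both $i$, contradicting minimality. These combine to give $|S|\ge|Y_1|+|Y_2|\ge 2k-r(M_1)-r(M_2)$, the lower bound. For the upper bound, $M^*$ is an intertwine of $M_1^*$ and $M_2^*$ of rank $|S|-k$, so applying the lower bound to $M^*$ yields $|S|\ge 2(|S|-k)-r(M_1^*)-r(M_2^*)$, equivalently $|S|\le 2k+r(M_1^*)+r(M_2^*)$.

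For the achievability claim, I would use that $FI(M_i)=\emptyset=FI(M_i^*)$ collapses the constraint in Theorem~\ref{thm:inter} to $S_i'\subseteq S_i$ with no further restriction. Using equations~(\ref{eq:|T_i|}) and $|S_i|=r(M_i)+r(M_i^*)$, the size of the constructed matroid simplifies to
$$|S|=|S_1|+|S_2|+|T_1|+|T_2|=2k+r(M_1^*)+r(M_2^*)-|S_1'|-|S_2'|.$$
As $(|S_1'|,|S_2'|)$ ranges over $\{0,\ldots,|S_1|\}\times\{0,\ldots,|S_2|\}$, the sum $|S_1'|+|S_2'|$ sweeps every integer from $0$ to $|S_1|+|S_2|=r(M_1)+r(M_1^*)+r(M_2)+r(M_2^*)$, so $|S|$ takes every integer in $[2k-r(M_1)-r(M_2),\,2k+r(M_1^*)+r(M_2^*)]$; any subset of $S_i$ of the prescribed size serves as $S_i'$.

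The main obstacle is the preliminary reduction to coindependent $X_i$: this potentially enlarges $Y_i$ and could, in principle, compromise the pairwise disjointness of the contraction sets. However, the intertwine argument applies verbatim to the new representation — contracting a common element of the new $Y_1$ and $Y_2$ still yields a proper minor with both $M_i$-minors — so $Y_1\cap Y_2=\emptyset$ persists. Once this point is handled, the duality that transforms the lower bound into the upper bound and the count for achievability are both routine.
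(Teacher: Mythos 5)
Your argument is correct and follows essentially the same path as the paper: normalize the minor representations so that the contraction sets have rank $k-r(M_i)$, observe that the intertwine property forces $Y_1\cap Y_2=\emptyset$ to get the lower bound, and read off the achievable sizes from $|T_1|+|T_2|$ as $|S_1'|+|S_2'|$ varies. The one genuine (minor) divergence is the upper bound: the paper argues directly that $X\cap X'=\emptyset$ and counts to get $|X|\le |S_2|+|Y'|$, whereas you apply the lower bound to $M^*$, which is a rank-$(|S|-k)$ intertwine of $M_1^*$ and $M_2^*$; your duality shortcut is clean and valid, and the rest of your proof (including the careful handling of the coindependence reduction and the formula $|S|=2k+r(M_1^*)+r(M_2^*)-|S_1'|-|S_2'|$) matches the paper's reasoning.
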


\begin{proof}
  Let $M$, on the set $S$, be a rank-$k$ intertwine of $M_1$ and
  $M_2$, so $M\del X/Y\simeq M_1$ and $M\del X'/Y'\simeq M_2$ for some
  subsets $X$, $Y$, $X'$, $Y'$ of $S$.  Standard arguments about
  minors show that we may assume that $Y$ and $Y'$ are independent
  sets with $|Y| = k-r(M_1)$ and $|Y'| = k-r(M_2)$.  No proper
  contraction of $M$ has both $M_1$- and $M_2$-minors, so $Y\cap Y' =
  \emptyset$; thus, $|Y|+|Y'| \leq |S|$, so $k-r(M_1) + k-r(M_2) \leq
  |S|$.  This lower bound on $|S|$ is achieved in the construction
  when $S'_1 = S_1$ and $S'_2 = S_2$.  No proper deletion of $M$ has
  both $M_1$- and $M_2$-minors, so $X\cap X' = \emptyset$; thus,
  $|X|\leq |S_2|+|Y'|$.  This inequality, the equation $|S| =
  |S_1|+|X| + |Y|$, and values for $|Y|$ and $|Y'|$ give the upper
  bound.  This bound in attained when $S'_1 = \emptyset = S'_2$.  By
  varying $|S'_1|$ and $|S'_2|$, all cardinalities between these
  bounds can be realized.
\end{proof}

\subsection{Representable matroids}
All spikes are contained in $\mcE(U_{2,6},U_{4,6})$, the class of
matroids that have neither $U_{2,6}$- nor $U_{4,6}$-minors.  The
results in this subsection and the next are akin to a corollary that
Vertigan got from his work on intertwines and spikes: some pairs of
matroids in $\mcE(U_{2,6},U_{4,6})$ have infinitely many intertwines
in $\mcE(U_{2,6},U_{4,6})$.

The result below uses the following equivalent formulations of two
special cases of our construction.  (The first assertion follows by
comparing the cyclic flats and their ranks; the second is the dual of
the first.  Recall that $T^k$ and $L^j$ denote truncations and lifts.)
If $k\geq r(M_1)+r(M_2)$, then
$$M_k(M_1,\emptyset,T_1;M_2,\emptyset,T_2)=T^k\bigl((M_1\times T_1)
\oplus (M_2\times T_2)\bigr).$$ If $k\geq r(M_1)+|S_1|+r(M_2)+|S_2|$
and $j=k-r(M_1)-r(M_2)$, then
$$M_k(M_1,S_1,T_1;M_2,S_2,T_2)=L^j\bigl((M_1+T_2) \oplus
(M_2+T_1)\bigr).$$

\begin{cor}
  Assume a class $\mcC$ of matroids is closed under direct sum, free
  extension, free coextension, truncation, and lift.  If $M_1,M_2\in
  \mcC$ satisfy the hypotheses of Theorem~\ref{thm:inter} and if
  either $FI(M_1)=\emptyset=FI(M_2)$ or
  $FI(M^*_1)=\emptyset=FI(M^*_2)$, then $\mcC$ contains infinitely
  many intertwines of $M_1$ and $M_2$.
\end{cor}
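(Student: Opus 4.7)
The plan is to apply Theorem~\ref{thm:inter} with an extreme choice of the sets $S'_1,S'_2$, and then use one of the two special-case formulas displayed just before the corollary to exhibit the resulting intertwine as being built from $M_1$ and $M_2$ purely via the operations the class $\mcC$ is assumed to respect.

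First, suppose $FI(M_1)=FI(M_2)=\emptyset$. Then the choice $S'_1=S'_2=\emptyset$ satisfies the legality condition $FI(M_i)\subseteq S'_i\subseteq S_i-FI(M_i^*)$ trivially. For each integer $k\geq 4\max\{|S_1|,|S_2|\}$, fix $T_1,T_2$ as in~(\ref{eq:|T_i|}); Theorem~\ref{thm:inter} then produces the intertwine $M_k(M_1,\emptyset,T_1;M_2,\emptyset,T_2)$, which by the first equivalent formulation stated above equals
$$T^k\bigl((M_1\times T_1)\oplus (M_2\times T_2)\bigr).$$
Since $M_1,M_2\in\mcC$ and $\mcC$ is closed under free coextension, direct sum, and truncation, this intertwine lies in $\mcC$.

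Dually, if $FI(M^*_1)=FI(M^*_2)=\emptyset$, I would take $S'_i=S_i$, which again satisfies the legality condition. For the same range of $k$, Theorem~\ref{thm:inter} produces an intertwine which, by the second equivalent formulation, equals
$$L^{k-r(M_1)-r(M_2)}\bigl((M_1+T_2)\oplus (M_2+T_1)\bigr),$$
and this matroid lies in $\mcC$ by closure under free extension, direct sum, and lift.

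Finally, since the intertwines produced in either case have rank exactly $k$, letting $k$ range over the (infinite) set of integers at least $4\max\{|S_1|,|S_2|\}$ yields intertwines of unboundedly many distinct ranks, hence infinitely many pairwise non-isomorphic intertwines of $M_1$ and $M_2$ in $\mcC$. There is no real obstacle; the only step requiring care is verifying the legality of the two extreme choices of $S'_i$, after which the two hypotheses split neatly into the truncation-of-coextension case and the lift-of-extension case, and the closure assumptions on $\mcC$ are exactly matched to the operations appearing in the corresponding formulas.
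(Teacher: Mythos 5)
Your proof is correct and is precisely the argument the paper intends: the two displayed formulas preceding the corollary exist exactly to handle the two cases via the extreme choices $S'_i=\emptyset$ (legal when $FI(M_i)=\emptyset$) and $S'_i=S_i$ (legal when $FI(M_i^*)=\emptyset$), with the bound $k\geq 4\max\{|S_1|,|S_2|\}$ comfortably implying the hypotheses $k\geq r(M_1)+r(M_2)$ and $k\geq r(M_1)+|S_1|+r(M_2)+|S_2|$ of those formulas. Your closing observation that distinct ranks give non-isomorphic intertwines correctly supplies the infinitude.
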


Such classes $\mcC$ include the class of matroids that are
representable over a given infinite field and the class of matroids
that are representable over a given characteristic.

\subsection{Transversal  matroids and gammoids}
We next show that the intertwine $M$ that we constructed is
transversal if and only if $M_1$ and $M_2$ are; we also treat the
corresponding statements for several related types of matroids.  We
will use the characterization of transversal matroids in
Lemma~\ref{lem:masoningleton}, which is due to
Ingleton~\cite{ingleton} and refines a result of Mason.  For a
collection $\mcF$ of sets, let $\cap\mcF$ be $\bigcap_{X\in \mcF}X$
and $\cup\mcF$ be $\bigcup_{X\in \mcF}X$.

\begin{lemma}\label{lem:masoningleton}
  A matroid $M$ is transversal if and only if for all $\mcA\subseteq
  \mcZ(M)$ with $\mcA\ne\emptyset$,
  \begin{equation}\label{mi}
    \sum_{\mcF\subseteq \mcA} (-1)^{|\mcF|+1} r(\cup\mcF)\geq r(\cap\mcA). 
  \end{equation}
\end{lemma}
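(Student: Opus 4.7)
The plan is to prove the two directions separately: for necessity I would work inside a maximal transversal presentation of $M$, while for sufficiency I would reconstruct a presentation out of the cyclic-flat data and verify that it represents $M$.

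For the ``only if'' direction, fix a maximal transversal presentation $(A_1,\ldots,A_r)$ of $M$ with $r=r(M)$, and use the standard fact that for each cyclic flat $F\in\mcZ(M)$ the rank is given by $r(F)=r-|D(F)|$, where $D(F):=\{i:A_i\cap F=\emptyset\}$; only the weaker $r(Y)\le r-|D(Y)|$ holds for arbitrary $Y$. Substituting $r(\cup\mcF)=r-|D(\cup\mcF)|$ (valid because $\cl(\cup\mcF)\in\mcZ(M)$ has the same rank and the same defect set) together with the identity $D(\cup\mcF)=\bigcap_{F\in\mcF}D(F)$ into the left side of~(\ref{mi}) and applying ordinary inclusion-exclusion on the sets $D(F)$, the left-hand side collapses to $r-\bigl|\bigcup_{F\in\mcA}D(F)\bigr|$. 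Since $\cap\mcA\subseteq F$ for every $F\in\mcA$, any $i\in D(F)$ satisfies $A_i\cap(\cap\mcA)=\emptyset$, so $\bigcup_{F\in\mcA}D(F)\subseteq D(\cap\mcA)$; combined with $r(\cap\mcA)\le r-|D(\cap\mcA)|$, this delivers~(\ref{mi}).

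For the ``if'' direction, assume~(\ref{mi}) holds on every nonempty $\mcA\subseteq\mcZ(M)$. I would construct a candidate presentation from the cyclic flats of $M$: for each proper cyclic flat $F$, include $r(M)-r(F)$ copies of the complement $S-F$, padded with copies of $S$ so that the total is exactly $r(M)$ sets. Let $M'$ be the resulting transversal matroid. The aim is to show that $\mcZ(M')=\mcZ(M)$ and that the two rank functions agree on these sets; property~(3) of Section~\ref{sec:pre} then forces $M=M'$. Ranks in $M'$ are computed by Hall's theorem, and~(\ref{mi}) is exactly the combinatorial content needed to guarantee that Hall's condition is tight on each $F\in\mcZ(M)$ and fails to be tight on sets outside $\mcZ(M)$.

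The main obstacle is the sufficiency direction: confirming that the tightness conditions in Hall's theorem single out \emph{precisely} the cyclic flats of $M$, neither more nor fewer, and that the rank coincidence on them is an immediate consequence of~(\ref{mi}) applied to the subfamily of cyclic flats of $M$ containing the set in question. Translating Hall's defect formula back into the alternating sum that appears in~(\ref{mi}) is the technical heart of the proof, and is the step where Ingleton's refinement of Mason's original inequality — which was phrased on arbitrary sets rather than just cyclic flats — really does its work.
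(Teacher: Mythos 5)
The paper does not prove this lemma; it quotes it from Ingleton's 1977 paper, so there is no internal proof to compare against and your attempt has to be judged on its own. The ``only if'' half is correct and is essentially the standard argument. The fact you invoke --- that in a presentation $(A_1,\dots,A_r)$ of a transversal matroid by $r=r(M)$ sets every cyclic flat $F$ satisfies $r(F)=r-|D(F)|$ --- is indeed a known lemma, the identity $D(\cup\mcF)=\bigcap_{F\in\mcF}D(F)$ is immediate, and the equality $r(\cup\mcF)=r-|D(\cup\mcF)|$ for a union of cyclic flats follows from the chain $r(\cup\mcF)\le r-|D(\cup\mcF)|\le r-|D(\cl(\cup\mcF))|=r(\cl(\cup\mcF))=r(\cup\mcF)$ (it would be cleaner to say this than to assert that the defect sets coincide, since that coincidence is a consequence rather than an obvious fact). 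Inclusion--exclusion and the containment $\bigcup_{F\in\mcA}D(F)\subseteq D(\cap\mcA)$ then yield~(\ref{mi}).

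The ``if'' direction, however, contains a genuine error and a genuine gap. The presentation you propose --- $r(M)-r(F)$ copies of $S-F$ for each proper cyclic flat $F$, padded with copies of $S$ up to $r(M)$ sets --- is not correct, and the count does not close: for $M=U_{1,2}\oplus U_{1,2}\oplus U_{1,2}$ the six nonempty proper cyclic flats have coranks $2,2,2,1,1,1$, so your recipe produces nine sets, which cannot be ``padded up to'' $r(M)=3$; the actual maximal presentation consists of the three parallel classes, i.e., one copy of $S-F$ for each corank-one cyclic flat and zero copies for the rank-one ones. In general the multiplicity $m(F)$ of $S-F$ in the maximal presentation is not the corank of $F$ but the solution of $r(M)-r(G)=\sum_{F\supseteq G,\,F\in\mcZ(M)}m(F)$ for all $G\in\mcZ(M)$, i.e., a M\"obius-type inversion over the upper interval of $F$ in $\mcZ(M)$. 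The hard content of the theorem is precisely to show from~(\ref{mi}) that these inverted multiplicities are nonnegative and that the resulting transversal matroid has the same cyclic flats and ranks as $M$; this is exactly the step you label ``the technical heart'' and defer. As written, the sufficiency direction is therefore not a proof: the construction is wrong and the verification is absent.
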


In this result, it suffices to consider only antichains $\mcA$ of
cyclic flats since if $X,Y\in \mcA$ with $X\subset Y$, then using
$\mcA - \{Y\}$ in place of $\mcA$ does not change either side of
inequality (\ref{mi}); with $\mcA$, the terms on the left side that
include $Y$ cancel via the involution that adjoins $X$ to, or omits
$X$ from, $\mcF$.  Also, it suffices to focus on inequality (\ref{mi})
for $|\mcA|>2$ since equality holds when $|\mcA| = 1$ and the case of
$|\mcA| = 2$ is the semimodular inequality.

\begin{cor}\label{cor:trcoex}
  A matroid $M$ is transversal if and only if $M\times x$ is.
\end{cor}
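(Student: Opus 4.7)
The plan is to prove both implications via Lemma~\ref{lem:masoningleton}, exploiting an order-preserving bijection between the nontrivial antichains in $\mcZ(M)$ and those in $\mcZ(M \times x)$ and verifying that the Mason--Ingleton inequality for one translates directly into that for the other.

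First, from the description of the cyclic flats of a free coextension recalled in Section~\ref{sec:pre}, the map $F \mapsto F \cup x$ is an order-preserving bijection from $\mcZ(M) - \{\emptyset\}$ onto $\mcZ(M \times x) - \{\emptyset\}$; it therefore induces a bijection between antichains of nonempty cyclic flats in the two lattices. The only antichain containing $\emptyset$ is $\{\emptyset\}$ itself, and inequality (\ref{mi}) is trivial on both sides in that case, so I may restrict attention to antichains of nonempty cyclic flats.

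Next, since $(M \times x)/x = M$ and $x$ is a non-loop of $M \times x$ (the only rank-$0$ cyclic flat of $M \times x$ is $\emptyset$), the standard contraction formula gives $r_{M \times x}(Y) = r_M(Y - x) + 1$ whenever $x \in Y$. Applied to an antichain $\mcA = \{F_i \cup x : i \in I\}$ in $\mcZ(M \times x)$ and its preimage $\mcA' = \{F_i : i \in I\}$ in $\mcZ(M)$, this yields $r_{M \times x}(\cup \mcF) = r_M(\cup \mcF') + 1$ for each nonempty $\mcF \subseteq \mcA$, where $\mcF'$ is the corresponding subset of $\mcA'$, and $r_{M \times x}(\cap \mcA) = r_M(\cap \mcA') + 1$.

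Substituting these expressions into inequality (\ref{mi}) for $M \times x$, the left-hand side picks up an extra $\sum_{k=1}^{|\mcA|} \binom{|\mcA|}{k}(-1)^{k+1} = 1$ from the constants, which exactly cancels the $+1$ appearing on the right-hand side (the $\mcF = \emptyset$ term contributes $0$ on both sides); the inequality for $\mcA$ in $M \times x$ therefore reduces to inequality (\ref{mi}) for $\mcA'$ in $M$. Since the correspondence is a bijection, the equivalence runs in both directions, giving the corollary. There is no serious obstacle here; the only care needed is bookkeeping of the alternating-sum constants.
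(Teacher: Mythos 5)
Your proof is correct and follows the route the paper intends: the corollary is stated without an explicit proof precisely because it is meant to follow from Lemma~\ref{lem:masoningleton} together with the description of $\mcZ(M\times x)$ given in Section~\ref{sec:pre}, exactly as you carry out. Your bookkeeping of the rank shift by $+1$ and of the alternating sum of constants is accurate, and the reduction to antichains of nonempty cyclic flats disposes of the only case ($\mcA=\{\emptyset\}$) not covered by the bijection.
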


Cotransversal matroids are duals of transversal matroids.
Bitransversal matroids are both transversal and cotransversal.
Gammoids are minors of transversal matroids.  Restrictions of
transversal matroids are transversal, so any gammoid is a contraction
of some transversal matroid; it follows that any gammoid is a
nullity-preserving contraction of some transversal matroid.  The class
of gammoids is closed under duality, so any gammoid has a
rank-preserving extension to a cotransversal matroid.

\begin{thm}\label{thm:transv}
  Assume inequality~(\ref{ineq:k}) holds.  The matroids $M_1$ and
  $M_2$ are transversal if and only if
  $M=M_k(M_1,S'_1,T_1;M_2,S'_2,T_2)$ is.  The corresponding statements
  hold for cotransversal matroids, bitransversal matroids, and
  gammoids.
\end{thm}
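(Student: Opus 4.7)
The plan is to prove the transversal assertion using the Mason--Ingleton inequality of Lemma~\ref{lem:masoningleton}, derive the cotransversal assertion by duality via Theorem~\ref{thm:dual}, combine them for bitransversality, and handle gammoids by a contraction argument.

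For transversal \emph{only if}, I would use that $M|(S_1 \cup T_1 \cup S'_2) = M_1 \times (T_1 \cup S'_2)$ is a restriction of $M$ and that restrictions preserve transversality; iterating Corollary~\ref{cor:trcoex} then yields $M_1$ transversal, and $M_2$ is symmetric. For \emph{if}, set $N_1 = M_1 \times (T_1 \cup S'_2)$ and $N_2 = M_2 \times (T_2 \cup S'_1)$, both transversal by Corollary~\ref{cor:trcoex}, and verify inequality~(\ref{mi}) for $M$ on every antichain $\mcA$ of cyclic flats. Writing $\mcA = \mcA_1 \cup \mcA_2$ with $\mcA_i \subseteq \mcZ'(N_i)$, the case in which one $\mcA_i$ is empty reduces directly to~(\ref{mi}) for the transversal matroid $N_{3-i}$, since all unions and the intersection lie inside the relevant restriction where $M$ and $N_{3-i}$ agree.

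The mixed case (both $\mcA_i$ nonempty) will be the main obstacle. I would split
\[
\Phi(\mcA, M) = \Phi(\mcA_1, N_1) + \Phi(\mcA_2, N_2) + \Phi^{\mathrm{mix}},
\]
where $\Phi^{\mathrm{mix}}$ sums over nonempty $\mcF_1 \subseteq \mcA_1$, $\mcF_2 \subseteq \mcA_2$. For such $\mcF$, writing $V_i = \cup \widetilde{\mcF_i} \subseteq S_i$ (stripping off $T_i$ and $S'_{3-i}$) and applying~(\ref{rankbycyclicflats}) to $\cup \mcF = V_1 \cup V_2 \cup T_1 \cup T_2 \cup S'_1 \cup S'_2$, the minimum over $\mcZ(M)$ simplifies to
\[
r_M(\cup \mcF) = \min(A_1, A_2, k),
\]
with $A_1 = r_1(V_1 \cup S'_1) + |T_1| + |T_2| + |S'_2| + |V_2 - S'_2|$ and $A_2$ symmetric. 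Substituting~(\ref{eq:|T_i|}) and using $r_1(V_1 \cup S'_1) \geq r_1(S'_1) = |S'_1| - \eta_1(S'_1)$, the inequality $A_1 \geq k$ reduces to $k \geq r(M_1) + r(M_2) + \eta_1(S'_1)$, which is weaker than~(\ref{ineq:k}); symmetrically $A_2 \geq k$. Hence $r_M(\cup \mcF) = k$ for every mixed $\mcF$, and a short inclusion--exclusion yields $\Phi^{\mathrm{mix}} = -k$. Combining the Mason--Ingleton bounds $\Phi(\mcA_i, N_i) \geq r_{N_i}(\cap \mcA_i) = r_i(\cap \widetilde{\mcA_i}) + |T_i| + |S'_{3-i}|$ (using $r_{N_i}(A \cup X) = r_i(A) + |X|$ for $A \subseteq S_i$) with the observation that $\cap \mcA = (\cap \widetilde{\mcA_1} \cap S'_1) \cup (\cap \widetilde{\mcA_2} \cap S'_2)$ lies in the independent set $S'_1 \cup S'_2$ (from the proof of Theorem~\ref{thm:propshold}), the desired $\Phi(\mcA, M) \geq r(\cap \mcA)$ reduces via~(\ref{ineq:k}) to $|\cap \widetilde{\mcA_i} \cap S'_i| - r_i(\cap \widetilde{\mcA_i}) \leq \eta_i(S'_i)$, each of which follows from the monotonicity of rank and nullity in $M_i$.

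The cotransversal statement would then follow by applying the transversal case to $M^*$ via Theorem~\ref{thm:dual} (the required inequality~(\ref{ineq:k}) for $j$ is automatic); bitransversality is the conjunction. For gammoids, \emph{only if} is immediate since $M_1, M_2$ are minors of $M$ and gammoids are minor-closed. For \emph{if}, I would write each $M_i = N_i / Y_i$ with $N_i$ transversal and $Y_i$ independent in $N_i$, and set $\tilde N = M_{k''}(N_1, S'_1 \cup Y_1, T_1; N_2, S'_2 \cup Y_2, T_2)$ with $k'' = k + |Y_1| + |Y_2|$. Routine checks (using $\eta_{N_i}(S'_i \cup Y_i) = \eta_i(S'_i)$ by the independence of $Y_i$) show that~(\ref{ineq:k}) transfers to $\tilde N$ and that the values of $|T_i|$ match those for $M$; the already-established transversal case then gives $\tilde N$ transversal, and a direct comparison of cyclic flats via Lemma~\ref{lem:cy} confirms $\tilde N / (Y_1 \cup Y_2) = M$, exhibiting $M$ as a gammoid.
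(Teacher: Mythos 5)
Your proposal is correct, and its core — the Mason--Ingleton verification via the splitting $\mcA=\mcA_1\cup\mcA_2$, the observation that every mixed union spans so the mixed terms contribute $-k$, and the reduction to the bounds for the two free coextensions — is exactly the paper's argument. Two points differ, both legitimately. First, for the final inequality $r(\cap\mcA_1)+r(\cap\mcA_2)-k\geq r(\cap\mcA)$ the paper simply invokes semimodularity together with the fact that $\cap\mcA_1\cup\cap\mcA_2\supseteq T_1\cup T_2\cup S'_1\cup S'_2$ spans $M$; your explicit computation (using the independence of $S'_1\cup S'_2$ and reducing to $|\cap\widetilde{\mcA_i}\cap S'_i|-r_i(\cap\widetilde{\mcA_i})\leq\eta_i(S'_i)$ via inequality~(\ref{ineq:k})) reaches the same conclusion by essentially redoing the estimate from the proof of Theorem~\ref{thm:propshold}; the semimodularity route is shorter. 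Second, for gammoids the paper goes the dual way: it takes rank-preserving cotransversal extensions $M_i'$ of the $M_i$ and exhibits $M$ as a restriction of the cotransversal matroid $M_k(M_1',S_1',T_1;M_2',S_2',T_2)$, whereas you write $M_i=N_i/Y_i$ as nullity-preserving contractions of transversal matroids and exhibit $M$ as $\tilde N/(Y_1\cup Y_2)$ for a transversal instance of the construction with $k''=k+|Y_1|+|Y_2|$. Your identity $\tilde N/(Y_1\cup Y_2)=M$ does hold (one checks, via equation~(\ref{rankbycyclicflats}) and the independence of $Y_1\cup Y_2$ inside $\Sigma_1'\cup\Sigma_2'$, that every term of the rank formula for $\tilde N$ at $A\cup Y_1\cup Y_2$ exceeds the corresponding term for $M$ at $A$ by exactly $|Y_1|+|Y_2|$), but the verification is slightly more delicate than the paper's deletion-side check, since cyclic flats descend less transparently under contraction than under deletion of freely added elements; it deserves more than the phrase ``routine checks'' in a final write-up.
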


\begin{proof}
  Since $M_1\times (T_1\cup S'_2)$ and $M_2\times (T_2\cup S'_1)$ are
  restrictions of $M$, from Corollary~\ref{cor:trcoex} it follows that
  if $M$ is transversal, then so are $M_1$ and $M_2$.  Now assume
  $M_1$ and $M_2$ are transversal.  Let $\mcA$ be an antichain in
  $\mcZ(M)$ with $|\mcA|>2$.  Set $$\mcA_1 = \mcA \cap \mcZ'(M_1\times
  (T_1\cup S'_2)) \quad \text{ and } \quad \mcA_2 = \mcA \cap
  \mcZ'(M_2\times (T_2\cup S'_1)).$$ Thus, $\mcA$ is the disjoint
  union of $\mcA_1$ and $\mcA_2$.  By Corollary~\ref{cor:trcoex} and
  Lemma~\ref{lem:masoningleton}, inequality (\ref{mi}) holds for
  $\mcA_1$ if it is nonempty, and likewise for $\mcA_2$; thus, this
  inequality holds for $\mcA$ if one of $\mcA_1$ and $\mcA_2$ is
  empty.  Assume neither is empty.  For $F_1\in \mcA_1$ and $F_2\in
  \mcA_2$, we have $r_M(F_1\cup F_2) = r(M)$, so
  \begin{align*}
    \sum_{\mcF\subseteq\mcA} (-1)^{|\mcF|+1} r(\cup\mcF) = \, &
    \sum_{\mcF_1\subseteq\mcA_1} (-1)^{|\mcF_1|+1} r(\cup\mcF_1) +
    \sum_{\mcF_2\subseteq\mcA_2} (-1)^{|\mcF_2|+1}
    r(\cup\mcF_2)    \\
    & \qquad + \sum\limits_{\substack{\mcF_1\subseteq\mcA_1,\,
        \mcF_1\ne\emptyset \\ \mcF_2\subseteq\mcA_2,\,
        \mcF_2\ne\emptyset}} (-1)^{|\mcF_1|+|\mcF_2|+1}\, r(M)\\
    \geq \, & r(\cap\mcA_1) + r(\cap\mcA_2) - r(M) \\
    \geq \,& r(\cap\mcA),
  \end{align*}
  where the last line follows from semimodularity along with the
  inclusions $T_1\cup S'_2\subseteq \cap\mcA_1$ and $T_2\cup
  S'_1\subseteq \cap\mcA_2$, and the fact that $T_1\cup T_2\cup
  S'_1\cup S'_2$ spans $M$ (a consequence of
  equation~(\ref{rankbycyclicflats}) and inequality~(\ref{ineq:k})).
  Thus, inequality (\ref{mi}) holds, so $M$ is transversal. 

  The assertions about cotransversal and bitransversal matroids follow
  by Theorem~\ref{thm:dual}.

  If $M$ is a gammoid, then so are its minors $M_1$ and $M_2$.  Now
  assume $M_1$ and $M_2$ are gammoids.  Let $M'_1$ and $M'_2$ be
  rank-preserving cotransversal extensions of $M_1$ and $M_2$.  Thus,
  $M_k(M'_1,S'_1,T_1;M'_2,S'_2,T_2)$ is cotransversal since
  inequality~(\ref{ineq:k}) holds with $M'_1$ and $M'_2$ in place of
  $M_1$ and $M_2$.  Comparing the cyclic flats and their ranks shows
  that $M$ is a restriction of $M_k(M'_1,S'_1,T_1;M'_2,S'_2,T_2)$, so
  $M$ is a gammoid.
\end{proof}

\begin{cor}
  If $M_1$ and $M_2$ satisfy the hypotheses of Theorem~\ref{thm:inter}
  and are transversal, then infinitely many intertwines of $M_1$ and
  $M_2$ are transversal.  The corresponding statements hold for
  cotransversal matroids, bitransversal matroids, and gammoids.
\end{cor}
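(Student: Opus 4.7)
The plan is to chain Theorem~\ref{thm:inter} with Theorem~\ref{thm:transv}. Under the hypotheses of Theorem~\ref{thm:inter} (which already include a fixed choice of $S'_1$ and $S'_2$), for each integer $k \geq 4\max\{|S_1|,|S_2|\}$, and any choice of disjoint sets $T_1,T_2$ of the sizes prescribed by equation~(\ref{eq:|T_i|}), the construction produces a rank-$k$ intertwine $N_k = M_k(M_1,S'_1,T_1;M_2,S'_2,T_2)$ of $M_1$ and $M_2$. Matroids of distinct ranks are non-isomorphic, so the family $\{N_k\}_k$ already contains infinitely many pairwise non-isomorphic intertwines.

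To upgrade to a family within the chosen class, I would invoke Theorem~\ref{thm:transv}. Its only structural hypothesis is inequality~(\ref{ineq:k}); but this is implied by $k \geq 4\max\{|S_1|,|S_2|\}$ via the trivial bounds $r(M_i)\leq |S_i|$ and $\eta_i(S'_i)\leq |S_i|$, so
$$r(M_1)+\eta_1(S'_1)+r(M_2)+\eta_2(S'_2)\leq 2|S_1|+2|S_2|\leq 4\max\{|S_1|,|S_2|\}\leq k.$$
Consequently, whenever $M_1$ and $M_2$ are transversal, every $N_k$ is transversal; the same deduction goes through verbatim for cotransversal matroids, bitransversal matroids, and gammoids, since Theorem~\ref{thm:transv} treats all four classes in parallel.

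Taken together, the matroids $\{N_k\,:\,k\geq 4\max\{|S_1|,|S_2|\}\}$ constitute an infinite family of pairwise non-isomorphic intertwines of $M_1$ and $M_2$, each lying in the chosen class. There is no genuine obstacle in this argument; it is a packaging corollary whose content is carried entirely by the two preceding theorems, and the only verification is the routine implication from $k\geq 4\max\{|S_1|,|S_2|\}$ to inequality~(\ref{ineq:k}) recorded above.
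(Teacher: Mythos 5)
Your proposal is correct and matches the paper's (implicit) argument: the corollary is stated without proof precisely because it is the immediate chaining of Theorem~\ref{thm:inter} with Theorem~\ref{thm:transv}, and your verification that $k\geq 4\max\{|S_1|,|S_2|\}$ implies inequality~(\ref{ineq:k}) is the only detail that needs checking (and is in any case needed for the construction in Theorem~\ref{thm:inter} to be defined at all).
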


\subsection{Uniform minors}
We claim that $M|B_1\cup B_2\cup T_1\cup T_2$, where $B_i$ is a basis
of $M_i$, is the uniform matroid $U_{k,2k-|S'_1|-|S'_2|}$.  To see
this, note that if $C$ were a circuit in this restriction with
$r(C)<k$, then $\cl_M(C)\in\mcZ'(M)$; however, it follows from the
construction that flats in $\mcZ'(M)$ intersect $B_1\cup B_2\cup
T_1\cup T_2$ in independent sets.

\begin{cor}\label{cor:unif}
  If $M_1$ and $M_2$ satisfy the hypotheses of
  Theorem~\ref{thm:inter}, then for any integer $n$, some intertwine
  of $M_1$ and $M_2$ has a $U_{n,2n}$-minor.
\end{cor}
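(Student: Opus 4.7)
The plan is to combine the uniform restriction identified in the paragraph immediately preceding the corollary with a routine extraction of $U_{n,2n}$ from a sufficiently large uniform matroid.

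First, using the hypotheses of Theorem~\ref{thm:inter}, fix admissible sets $S'_1$ and $S'_2$; these hypotheses already force $FI(M_i)\cap FI(M_i^*)=\emptyset$, so one may simply take $S'_i=FI(M_i)$. Set $c=|S'_1|+|S'_2|$, a constant depending only on $M_1,M_2,S'_1,S'_2$. For the given $n$, choose an integer $k$ with
$$k\geq\max\{4\max\{|S_1|,|S_2|\},\,n+c\},$$
and let $M=M_k(M_1,S'_1,T_1;M_2,S'_2,T_2)$ be the intertwine produced by Theorem~\ref{thm:inter}.

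Next, for bases $B_i$ of $M_i$, the paragraph preceding the corollary gives $M|(B_1\cup B_2\cup T_1\cup T_2)=U_{k,\,2k-c}$. Our choice $k\geq n+c$ gives $2k-c\geq k+n$, so $U_{k,\,2k-c}$ admits $U_{n,2n}$ as a minor in the usual way: since all elements of a uniform matroid are equivalent and every $(k-n)$-subset is independent, contracting any $k-n$ elements produces the uniform matroid $U_{n,k-c+n}$, and deleting any $k-c-n\geq 0$ of the remaining elements produces $U_{n,2n}$. This $U_{n,2n}$ is then a minor of $M$, as required.

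The only thing to watch is that the lower bound on $k$ imposed by Theorem~\ref{thm:inter} and the slack $n+c$ needed to extract $U_{n,2n}$ from $U_{k,\,2k-c}$ are simultaneously satisfiable; both being linear in $k$, this is handled trivially by taking $k$ large, so there is no genuine obstacle.
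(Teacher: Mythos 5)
Your proof is correct and follows the paper's own route: it uses the uniform restriction $M|(B_1\cup B_2\cup T_1\cup T_2)=U_{k,\,2k-|S'_1|-|S'_2|}$ established just before the corollary and then extracts $U_{n,2n}$ by taking $k$ large enough. The only difference is that you make explicit the bound $k\geq n+|S'_1|+|S'_2|$ and the contraction/deletion counts, which the paper leaves implicit.
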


\subsection{Connectivity}
Recall that for any non-uniform matroid, $\lambda(M)\leq \kappa(M)$
where $\lambda(M)$ is the (Tutte) connectivity of $M$ and $\kappa(M)$
is its vertical connectivity.  Thus, showing that the connectivity of
intertwines can be arbitrarily large gives the counterpart for
vertical connectivity.

Qin~\cite{hq} proved $ \lambda((M+p)\times q) - \lambda(M)\in\{1,2\}$
for any matroid $M$.  For the matroid $M$ constructed above, fix a
subset $T'_2$ of $T_2$ with $|T'_2| = \eta(M_2)$.  Comparing the
cyclic flats and their ranks, with the help of Lemma~\ref{lem:del},
gives
$$M\del T'_2 = \bigl(M_1\times (T_1\cup
S'_2)\bigr)+\bigl((T_2-T'_2)\cup (S_2 - S'_2)\bigr).$$ After some
number of free coextensions or free extensions of $M_1$ according to
the difference between $|T_1\cup S'_2|$ and $|(T_2-T'_2)\cup (S_2 -
S'_2)|$ (which does not change as $k$ increases), the deletion $M\del
T'_2$ can be seen as resulting from free extension/free coextension
pairs, so the connectivity of such deletions $M\del T'_2$ grows with
$k$.  Since extending as needed by the elements in $T'_2$ to obtain
$M$ preserves the rank and introduces no circuits of size
$|T_2|+|S'_1|$ or smaller, $\lambda(M)$ also grows with $k$.

\begin{cor}
  If $M_1$ and $M_2$ satisfy the hypotheses of
  Theorem~\ref{thm:inter}, then for any integer $n$, some intertwine
  of $M_1$ and $M_2$ is $n$-connected.
\end{cor}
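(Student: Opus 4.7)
The plan is to derive the corollary from the analysis sketched just before it, namely by showing that the intertwine $M=M_k(M_1,S'_1,T_1;M_2,S'_2,T_2)$ from Theorem~\ref{thm:inter} satisfies $\lambda(M)\to\infty$ as $k\to\infty$; given $n$, choosing $k$ large enough then yields an $n$-connected intertwine.

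First, I would fix $T'_2\subseteq T_2$ with $|T'_2|=\eta(M_2)$ and verify the identity
$$M\del T'_2=\bigl(M_1\times(T_1\cup S'_2)\bigr)+\bigl((T_2-T'_2)\cup(S_2-S'_2)\bigr)$$
by comparing cyclic flats and their ranks. The cyclic flats of $M$ of the form $F_2\cup T_2\cup S'_1$ with $F_2\in\mcZ'(M_2)$ have nullity $\eta_2(F_2)\le|T'_2|$, and every nonempty cyclic flat they contain also contains $T'_2$; Lemma~\ref{lem:del} then makes $(F_2\cup T_2\cup S'_1)-T'_2$ independent, and Lemma~\ref{lem:cy} ensures these flats contribute nothing to $\mcZ(M\del T'_2)$. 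The surviving cyclic flats, with their ranks preserved, match those of the right-hand side. Next, I would recognize the right-hand side as obtained from $M_1$ by $k-r(M_1)$ free coextensions followed by $k-|S'_1|-|S'_2|$ free extensions; the difference of these counts is independent of $k$, so after using the commutativity of free extension and free coextension to interleave them into (extension, coextension) pairs, we obtain roughly $k$ such pairs plus a bounded number of unpaired operations. Qin's bound $\lambda((N+p)\times q)-\lambda(N)\in\{1,2\}$, applied iteratively, then gives $\lambda(M\del T'_2)\to\infty$ with $k$.

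The remaining step is to transfer this growth from $\lambda(M\del T'_2)$ to $\lambda(M)$. Re-adding the $|T'_2|=\eta(M_2)$ elements preserves the rank, and since any circuit of $M$ through an element of $T'_2$ lies in some cyclic flat $F_2\cup T_2\cup S'_1$, it must have size exceeding $|T_2|+|S'_1|$. Consequently, any separation of $M$ of small order pulls back to a separation of $M\del T'_2$ of comparable order, so $\lambda(M)\ge\lambda(M\del T'_2)-c$ for a constant $c$ depending only on $|T'_2|$, and $\lambda(M)\to\infty$ follows. The main obstacle is giving a clean argument for this last inequality; the sketch preceding the corollary treats it informally, relying on the fact that deletion of a bounded number of elements cannot arbitrarily diminish connectivity, which should follow either from a standard connectivity-girth relationship or from a direct examination of the separations.
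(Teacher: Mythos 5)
Your proposal is correct and follows essentially the same route as the paper: the same deletion identity for $M\del T'_2$ via Lemmas~\ref{lem:del} and \ref{lem:cy}, the same pairing of free extensions with free coextensions to invoke Qin's bound, and the same (admittedly informal, in the paper too) transfer of growing connectivity from $M\del T'_2$ back to $M$ using that re-adding $T'_2$ preserves rank and creates no circuits of size at most $|T_2|+|S'_1|$.
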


With the truncation that cuts the rank of the direct sum in half, it
follows that the intertwine $T^k\bigl((M_1\times T_1) \oplus
(M_2\times T_2)\bigr)$ (arising from $S'_1=\emptyset=S'_2$) is
rounded, that is, the ground set is not the union of two proper flats,
or, equivalently, each cocircuit spans. (This notion, also called
non-splitting, is equivalent to having $\kappa(M)=r(M)$.)

Note that in a rank-$n$ spike $M$ with $n\geq 4$, if $H$ is a
hyperplane spanned by $n-2$ legs (using the terminology
of~\cite{jim}), then $(H,E(M)-H)$ is a vertical $3$-separation of $M$.
Thus, the construction in this paper and that in~\cite{jim} yield
intertwines with contrasting connectivity and vertical connectivity
properties.

\section{The Relation to Vertigan's Construction}\label{sec:dirk}

As mentioned in the introduction, the first construction of infinite
sets of intertwines for pairs of matroids was given by Dirk Vertigan.
In this section we briefly outline his construction and show that,
although the approaches differ, some instances of the two
constructions coincide; furthermore, both approaches can be extended
to yield the same collections of intertwines.  Vertigan's theorem is
as follows.

\begin{thm}\label{thm:dirk}
  Assume neither $M_1$ nor $M_2$ can be obtained, up to isomorphism,
  from the other by any combination of minors, free extensions, and
  free coextensions.  If $FI(M_i)=\emptyset=FI(M_i^*)$ for $i\in
  \{1,2\}$, then $M_1$ and $M_2$ have infinitely many intertwines.
\end{thm}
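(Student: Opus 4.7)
The plan is to produce, for each sufficiently large integer $k$, a rank-$k$ intertwine of $M_1$ and $M_2$, thereby giving infinitely many as $k$ ranges. A natural candidate---Vertigan's construction recast in the notation of this paper---is
$$N_k = T^k\bigl((M_1\times T_1) \oplus (M_2\times T_2)\bigr),$$
where $T_1$ and $T_2$ are disjoint sets, disjoint from $S_1 \cup S_2$, of sizes $|T_i| = k - r(M_i)$. Choose $k \geq 4\max\{|S_1|,|S_2|\}$. By the reformulation noted in Section~\ref{sec:props}, $N_k$ coincides with $M_k(M_1,\emptyset,T_1;M_2,\emptyset,T_2)$, the matroid our main construction produces when $S'_1 = S'_2 = \emptyset$.

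First, I would verify that $M_1$ and $M_2$ are minors of $N_k$. By Theorem~\ref{thm:propshold}(i), $N_k$ restricted to $S_1 \cup T_1$ is $M_1 \times T_1$, and deleting $T_1$ recovers $M_1$; the $M_2$-case is symmetric. Second, I would show no proper minor of $N_k$ has both $M_1$- and $M_2$-minors. Under the assumption $FI(M_i) = \emptyset = FI(M_i^*)$, the containment $FI(M_i) \subseteq S'_i \subseteq S_i - FI(M_i^*)$ required by Theorem~\ref{thm:inter} is vacuous for any $S'_i \subseteq S_i$, so the choice $S'_1 = S'_2 = \emptyset$ is admissible and Theorem~\ref{thm:inter} delivers the intertwine property directly. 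Finally, distinct values of $k$ yield matroids of distinct ranks, hence pairwise nonisomorphic intertwines, proving the claim.

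Thus, given the main construction, the real content of this theorem is the intertwine verification in the special case $S'_1 = S'_2 = \emptyset$. The main obstacle in an independent, Vertigan-style argument---one that does not appeal to Theorem~\ref{thm:inter}---would be the single-element minor analysis: for each $a \in S_1 \cup T_1$, one must show that neither $N_k \del a$ nor $N_k/a$ has both an $M_1$- and an $M_2$-minor. The key tool is Lemma~\ref{lem:cy}, which governs how the nullities of cyclic flats change under deletion and contraction; combined with the hypothesis that neither $M_i$ can be obtained from the other by minors, free extensions, and free coextensions, this forces any single-element deletion or contraction to destroy at least one of the two target minors, exactly as in the proof of Theorem~\ref{thm:inter} specialized to the case of empty $S'_i$.
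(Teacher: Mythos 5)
Your proposal is correct, but it takes a different route from the paper's own treatment of this theorem. The paper presents Theorem~\ref{thm:dirk} as Vertigan's result and ``proves'' it only by sketching his original construction: build two rank-$k$ matroids $M'_1 = (M_1+(Y-S_1))\times(X-S_1)$ and $M'_2 = (M_2+(X-S_2))\times(Y-S_2)$ on $X\cup Y$ and take the matroid whose bases are the common bases of $M'_1$ and $M'_2$. You instead derive the theorem as a corollary of the paper's main Theorem~\ref{thm:inter}, specialized to $S'_1=S'_2=\emptyset$, using the observation that the hypothesis $FI(M_i)=\emptyset=FI(M_i^*)$ makes the constraint $FI(M_i)\subseteq S'_i\subseteq S_i-FI(M_i^*)$ vacuous. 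This is legitimate and is essentially what the paper's framing invites (the paper notes its hypotheses are weaker than Vertigan's, and Section~\ref{sec:dirk} shows the two constructions coincide in this special case via the basis description $M''_1$, $M''_2$); what Vertigan's basis-intersection approach buys instead is a self-contained argument not routed through cyclic-flat axioms, and one that tolerates non-disjoint ground sets. Two small points to fix: first, Theorem~\ref{thm:dirk} does not assume $S_1\cap S_2=\emptyset$, so you should say explicitly that you replace $M_2$ by an isomorphic copy on a ground set disjoint from $S_1$ (harmless, since intertwines are defined via isomorphic minors); second, to recover $M_1$ from the restriction $M_1\times T_1$ of $N_k$ you must \emph{contract} $T_1$, not delete it --- deleting $T_1$ from the free coextension $M_1\times T_1$ yields the lift $L^{|T_1|}(M_1)$, whereas $(M_1\times T_1)/T_1=M_1$.
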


The intertwines he constructed to prove this result are defined as
follows.  Let $S_1$ and $S_2$ be the ground sets of $M_1$ and $M_2$,
which, in contrast to Theorem~\ref{thm:inter}, need not be disjoint.
Let $X$ and $Y$ be disjoint $k$-element sets, where $k\geq
10\max\{|S_1|,|S_2|\}$, such that (i) $S_1\cup S_2\subseteq X\cup Y$,
(ii) $X\cap S_1$ has $r(M_1)$ elements and is dependent in $M_1$, and
(iii)~$Y\cap S_2$ has $r(M_2)$ elements and is dependent in $M_2$.
Set
$$M'_1 = \bigl(M_1+(Y-S_1)\bigr)\times (X-S_1) \quad \text{and} \quad
M'_2 = \bigl(M_2+(X-S_2)\bigr)\times (Y-S_2).$$ Thus,
$r(M'_1)=k=r(M'_2)$.  He argues that the intersection of the
collections of bases of $M'_1$ and $M'_2$ is the collection of bases
of a matroid on $X\cup Y$, and that this matroid is an intertwine of
$M_1$ and $M_2$.  Thus, this intertwine has rank $k$ and has $2k$
elements.  Vertigan observed that, as in Corollary~\ref{cor:unif},
these intertwines have uniform minors of large rank and corank.

To relate this construction to ours, we first show that the bases of
the intertwines we constructed can be described in a similar manner.
Using the notation in Section~\ref{sec:inter}, set
$$M_1''=\bigl(M_1\times (T_1\cup S'_2)\bigr)+\bigl(T_2\cup
(S_2-S'_2)\bigr)$$ and $$M_2''=\bigl(M_2\times (T_2\cup
S'_1)\bigr)+\bigl(T_1\cup (S_1-S'_1)\bigr).$$ Both $M''_1$ and $M''_2$
have rank $k$.  Observe that $\mcZ(M) = \mcZ(M''_1)\cup \mcZ(M''_2)$.
Using equation~(\ref{rankbycyclicflats}), it follows that a subset of
$S_1\cup S_2\cup T_1\cup T_2$ is a basis of $M$ if and only if it is a
basis of both $M''_1$ and $M''_2$.  In particular, the constructions
coincide when applied under the same set up, and the basis approach
can be extended to cover the results in this paper.  In the other
direction, it is easy to check that if we replace
inequality~(\ref{ineq:k}) with a slightly stronger inequality, then
Theorem~\ref{thm:propshold} applies even when $S_1$ and $S_2$ are not
disjoint; of course, then we need $S'_1\subseteq S_1 - S_2$ and
$S'_2\subseteq S_2 - S_1$.  Likewise, Theorem~\ref{thm:dual} can be
adapted (for instance, instead of $S_1-S'_1$ on the right, we need
$S_1-(S_2\cup S'_1)$).  Consistent with the hypotheses in
Theorem~\ref{thm:dirk}, Theorem~\ref{thm:inter} also applies provided
that $S_1\cap S_2$ is disjoint from $FI(M_i)$ and $FI(M_i^*)$ for
$i\in \{1,2\}$, Thus, an advantage of dealing with disjoint ground
sets is that it eliminates the need for assumptions about $FI(M_i)$
and $FI(M_i^*)$.

\vspace{5pt}

\begin{center}
  \textsc{Acknowledgements}
\end{center}

\vspace{3pt}

I am grateful to Jim Geelen for comments that led me to turn my
attention to the topic of intertwines and for valuable feedback on
this work.  I thank Dirk Vertigan for providing copies of the slides
of his talks on intertwines and spikes, and for permission to sketch
his construction.  I thank Anna de Mier for comments that improved the
exposition.

\end{document}